\newtheorem{thm}{Theorem}[section]
\newtheorem{rmk}[thm]{Remark}
\newtheorem{lem}[thm]{Lemma}
\newtheorem{deft}[thm]{Definition}
\newtheorem{ex}{Example}
\newcommand{\R}{\mathbb R}
\newcommand{\N}{\mathbb N}
\newcommand{\supp}{\mathop{\rm supp}}
\newcommand{\ep}{\varepsilon}
\newcommand{\va}{\varphi}
\newcommand{\ppp}{\partial}
\newcommand{\ooo}{\overline}
\newcommand{\OOO}{\Omega}
\newcommand{\xxxj}{x_j}
\newcommand{\weight}{e^{2s\va}}
\newcommand{\dist}{\mathrm{dist}\,} 
\begin{document}

\title*{
%MY Carleman estimates and an application to the observability inequality 
%for transport equations
Observability inequalities for transport equations through Carleman estimates
}
% Use \titlerunning{Short Title} for an abbreviated version of
% your contribution title if the original one is too long
\author{Piermarco Cannarsa, Giuseppe Floridia and Masahiro Yamamoto}
% Use \authorrunning{Short Title} for an abbreviated version of
% your contribution title if the original one is too long

\institute{Piermarco Cannarsa \at 
Department of Mathematics, 
      University of Rome \lq\lq Tor Vergata'',
      00133 Rome, Italy,\\
\email{cannarsa@mat.uniroma2.it}
\and Giuseppe Floridia \at 
Department of Mathematics and Applications \lq\lq R. Caccioppoli'',
      University of Naples Federico II,
%      Dipartimento di Matematica e Applicazioni ÒR. CaccioppoliÓ
%Universitˆ degli Studi Napoli ÒFederico IIÓ
%Via Cintia, Monte S. Angelo
%I-
80126 Naples, Italy, \email{giuseppe.floridia@unina.it}
\and Masahiro Yamamoto \at 
Department of Mathematical Sciences, The University of Tokyo, Komaba, Meguro, 
Tokyo, 153 Japan, \email{myama@ms.u-tokyo.ac.jp}}
%
%
%\institute{Piermarco Cannarsa \at Name, Address of Institute, \email{cannarsa@mat.uniroma2.it}
%\and Name of Second Author \at Name, Address of Institute \email{name@email.address}}
%
% Use the package "url.sty" to avoid
% problems with special characters
% used in your e-mail or web address
%
\maketitle

\abstract{
We consider the transport equation 
$\ppp_t u(x,t) + %\alpha'
H(t)\cdot \nabla u(x,t) = 0$ in $\OOO\times(0,T),$ where $T>0$
%for
%$x\in \OOO$ and $0 < t < T$, where 
and $\OOO\subset \R^d%,\, d\in\N,
$ is a bounded
domain with smooth boun\-dary $\ppp\OOO$.
First, we prove a Carleman estimate for solutions of finite energy with piecewise continuous weight functions.  Then, under a further condition %on $H$
 which guarantees that the 
orbits of $H$
 %$\{%\alpha' H(t)\in\R^d~:~ 0 \le t \le T\}$ 
 intersect $\ppp\OOO$, we prove an energy estimate which in turn yields an obser\-vability inequality.
% using a suitable Carleman estimate. 
 Our results are  motivated by applications to inverse problems.
% Carleman estimate.
%A %usual 
%cut-off argument and such a Carleman estimate 
%yield an energy estimate called an observability inequality.
}

\section{Introduction%and Carleman estimate
}\label{intro}
Let $ d\in\N$ and $\OOO \subset \R^d%, d\in\N,
$ be a bounded domain with smooth boundary 
$\ppp\OOO$, $\nu = \nu(x)$ be the unit outward normal vector 
at $x$ to $\ppp\OOO$, and let $x \cdot y$ and $|x|$ denote the scalar product of 
$x, y \in \R^d$ and the norm of $x\in \R^d,$ respectively.
We set $Q := \OOO\times (0,T),$ and
we consider
\begin{equation}\label{E1}
Pu(x,t) := \ppp_tu + H(t)\cdot \nabla u = 0 \quad 
\mbox{in $Q$},              
\end{equation}
where
$%\alpha'
H(t) := (H_1(t),\ldots, H_d(t)):[0,T]\rightarrow \R^d,$\,
$H\in C^1([0,T];\R^d)$.

Equation (\ref{E1}) is called a transport equation and $H(t)$ describes 
the velocity of the %convection 
flow, which is here assumed to be independent of the spatial 
variable $x$. 
%\begin{ex}\label{sources}
%Setting $$v(x,t) = f(x-\alpha(t))$$ with 
%$f \in C^1(\ooo{\OOO};\R)$ and 
%$\alpha = (\alpha_1, ..., \alpha_d) \in C^2([0,T];\R^d)$, we see that 
%$v=v(x,t)$ satisfies (\ref{E1}) where $H(t) = \alpha'(t)$, $0 \le t \le T$.
%Thus (\ref{E1}) is related to an inverse problem of determining
%a moving source $f$ with given $\alpha$ in the diffusion equation
%$$
%\partial_tw = \Delta w + f(x-\alpha(t)), \quad x\in \Omega, \, 0<t<T.
%$$
%\end{ex}
\subsection*{
%{\bf 
Problem formulation}
We assume 
%$$
%\vert H(t)\vert \neq0,\;\;\;\forall t\in[0,T],%H_0 > 0, 
%\quad 0\le t \le T,  
%$$
%and we set 
\begin{equation}\label{H1}
\displaystyle H_0:=\min_{t\in[0,T]}|H(t)|>0,\end{equation} 
%so that
%\begin{equation}\label{H1}
%\vert H(t)\vert \geq H_0 > 0, \quad \forall t\in [0,T].
%\end{equation}
%
and, without loss of generality, we suppose that ${\bf 0}=(0,\ldots,0)\in\overline{\Omega}.$
%Otherwise we can translate $\Omega$ suitably to reduce to the case of
%${\bf 0} \in \ooo{\OOO}$. 
 \\
%\$
%In the following Lemma \ref{Pr1} we show that the previous nondegenerate vector valued function $H(t)$ admits a partition $\{t_j\}_0^m$ of $[0,T]$ (
%\footnote{A partition $\{t_j\}_0^m$ of $[0,T]$ is a strictly increasing finite sequence $t_0, t_1, \ldots, t_m$ (for some $m\in\N$) of real numbers starting from the initial point $t_0=0$ and arriving at the final point $t_m=T.$\\
%In the following we will call it a {\it uniform partition} when the lenght of the intervals $[t_j,t_{j+1}],\;j=0,\ldots,m-1,$ is constant, that is, $t_{j+1}-t_j=\frac{T}{m}\;\;\;\forall j=0,\ldots,m-1$.}
%) 
%such that the angle of oscil\-lation of the vector $H(t)$ is less than $\frac{\pi}{4}$ in any time interval $[t_{j},t_{j+1}],\;j=0,\ldots,m-1.$
%%\begin{prop}\label{Pr1}
%%Given a vector valued function $H\in Lip([0,T];\R^d)$
%%%(\footnote{$H\in Lip([0,T];\R^d)\iff \exists L>0 : |H(t)-H(s)|_{\R^d}\leq L |t-s|$}) %
%%that satisfies condition 
%%(\ref{H1}), 
%%there exist
%% $m\in\N,\;$ and a partion $\{t_j\}_0^m$ of $[0,T]$
%% ($t_j\in[0,T],\;0:=t_0\leq %t_{j-1}<
%% t_j<t_{j+1}\leq t_m:=T, \; j=0,\ldots,m-1$)
%%%where $\eta_j:=
%%%\frac{H(t_j)}{|H(t_j)|}
%%%,\,j=1,\cdots,m.$
%%%for some $\eta_j\in \R^d$ with $|\eta_j|=1,\;j=1,\cdots,m.$
%%\end{prop}
%($t_j\in[0,T],\;0:=t_0\leq %t_{j-1}<
% t_j<t_{j+1}\leq t_m:=T, \; j=0,\ldots,m-1$)
%\$

Let us recall the following definition.
\begin{deft}
A partition $\{t_j\}_0^m$ of $[0,T]$ is a strictly increasing finite sequence $t_0, t_1, \ldots, t_m$ (for some $m\in\N$) of real numbers starting from the initial point $t_0=0$ and arriving at the final point $t_m=T.$

Hereafter, we will call $\{t_j\}_0^m$ a {\it uniform partition}  of $[0,T]$   when the length of the intervals $[t_j,t_{j+1}]$ is constant for $j=0,\ldots,m-1,$ that is, $%t_{j+1}-t_j=\frac{T}{m}$ $\left(\;
t_j = \frac{T}{m}j,\;\;j=0,\ldots,m%\right)
.$
\end{deft}
Lemma \ref{Pr1} below ensures that any vector-valued 
function $H(t),$ sati\-sfying (\ref{H1}), admits a 
partition $\{t_j\}_0^m$ of $[0,T]$ 
such that the angles of oscillations of the vector $H(t)$ are less than 
$\frac{\pi}{2}$ in any time interval $[t_{j},t_{j+1}],\;j=0,\ldots,m-1$ (see Figure \ref{cones}).\\

Given a partition $\{t_j\}_0^m$ of $[0,T],$ let us set
%Here by a uniform partition $\{t_j\}_0^m$ of $[0,T]$, we mean 
%$t_j = \frac{T}{m}j,\:$ for $j=0,1,..., m$.
%\\
\begin{equation}\label{etaj}
\eta_j:=\frac{H(t_j)}{|H(t_j)|},\;\;\;j=0,\ldots,
m-1.
\end{equation}

\begin{lem}\label{Pr1} Let \,$ S_*\in(1/\sqrt{2},1).$
For any given  $H\in Lip([0,T];\R^d)$,
satisfying condition (\ref{H1}), 
there exist $m\in\N$ and a partition $\{t_j\}_0^m$ of $[0,T]$
such that
\begin{equation}\label{H2}
\frac{H(t)}{\vert H(t)\vert} \cdot
\eta_j%\frac{H(t_j)}{|H(t_j)|}
\geq S_*,
%\frac{1}{\sqrt{2}},
\;\;\;\forall t\in[t_{j},t_{j+1}], \;\;\forall j=0,\ldots,m-1,
\end{equation}
where $\eta_j$ are defined in (\ref{etaj}).
\end{lem}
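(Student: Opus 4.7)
The plan is to exploit the fact that, since $H$ is Lipschitz on $[0,T]$ with $|H(t)|\ge H_0>0$, the normalized direction $t\mapsto H(t)/|H(t)|$ is itself Lipschitz (hence uniformly continuous) as a map into the unit sphere of $\R^d$. Then a sufficiently fine uniform partition of $[0,T]$ will force the oscillation of this unit-vector-valued map on each subinterval to be as small as we wish, which via a standard dot-product identity for unit vectors translates into the desired lower bound on $\frac{H(t)}{|H(t)|}\cdot\eta_j$.

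First I would fix $S_*\in(1/\sqrt{2},1)$ and pick a threshold $\ep>0$ small enough that for any two unit vectors $u,v\in\R^d$ satisfying $|u-v|\le\ep$, one has $u\cdot v\ge S_*$. This uses the identity $u\cdot v = 1-\tfrac{1}{2}|u-v|^2$, so the explicit requirement is $\ep\le\sqrt{2(1-S_*)}$, which is strictly positive since $S_*<1$. Next I would establish a Lipschitz (or uniform continuity) estimate for the normalized field: writing
\[
\frac{H(t)}{|H(t)|}-\frac{H(s)}{|H(s)|}
= \frac{H(t)-H(s)}{|H(t)|} + H(s)\left(\frac{1}{|H(t)|}-\frac{1}{|H(s)|}\right),
\]
and using $\big||H(t)|-|H(s)|\big|\le|H(t)-H(s)|$ together with $|H(t)|,|H(s)|\ge H_0$ and the Lipschitz constant $L$ of $H$, one obtains
\[
\left|\frac{H(t)}{|H(t)|}-\frac{H(s)}{|H(s)|}\right|\le \frac{2L}{H_0}|t-s|.
\]

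Then I would choose $m\in\N$ large enough that $\frac{2L}{H_0}\cdot\frac{T}{m}\le\ep$, and take the uniform partition $t_j=\frac{T}{m}j$ for $j=0,\dots,m$. With $\eta_j=H(t_j)/|H(t_j)|$ as in \eqref{etaj}, for every $t\in[t_j,t_{j+1}]$ we get $\big|\frac{H(t)}{|H(t)|}-\eta_j\big|\le\ep$, and the choice of $\ep$ yields $\frac{H(t)}{|H(t)|}\cdot\eta_j\ge S_*$, which is exactly \eqref{H2}.

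The only real obstacle is the bookkeeping of the Lipschitz estimate for $H/|H|$, but since $|H|$ is uniformly bounded below by $H_0>0$ this is a short computation. Note that the argument in fact produces a uniform partition, which is consistent with the definition given earlier in the excerpt, and the hypothesis $S_*>1/\sqrt 2$ is not needed for the lemma itself (it presumably intervenes in later uses of $\{t_j\}$).
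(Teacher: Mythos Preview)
Your proof is correct and follows essentially the same idea as the paper's: exploit the Lipschitz continuity of $H$ together with the lower bound $|H|\ge H_0$ to control the angular oscillation of $H$ on each cell of a sufficiently fine uniform partition. The only tactical difference is that you first show the \emph{normalized} field $t\mapsto H(t)/|H(t)|$ is Lipschitz and then invoke the identity $u\cdot v=1-\tfrac12|u-v|^2$ for unit vectors, whereas the paper estimates $H(t)\cdot\eta_j$ directly via $H(t)\cdot\eta_j\ge |H(t)|-2L\,T/m$ and then chooses $m\ge 2LT/(H_0(1-S_*))$; both arrive at the same conclusion, and your route even yields a slightly smaller admissible $m$.
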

Lemma \ref{Pr1} is proved in the  Appendix.
%\textcolor{red}{Put here the Picture 1} %Section \ref{SecPr1}.

\begin{figure}[htbp]
\label{cones}
\begin{center}
\hskip-.5cm
\includegraphics[width=7.45cm, height=4.10cm]{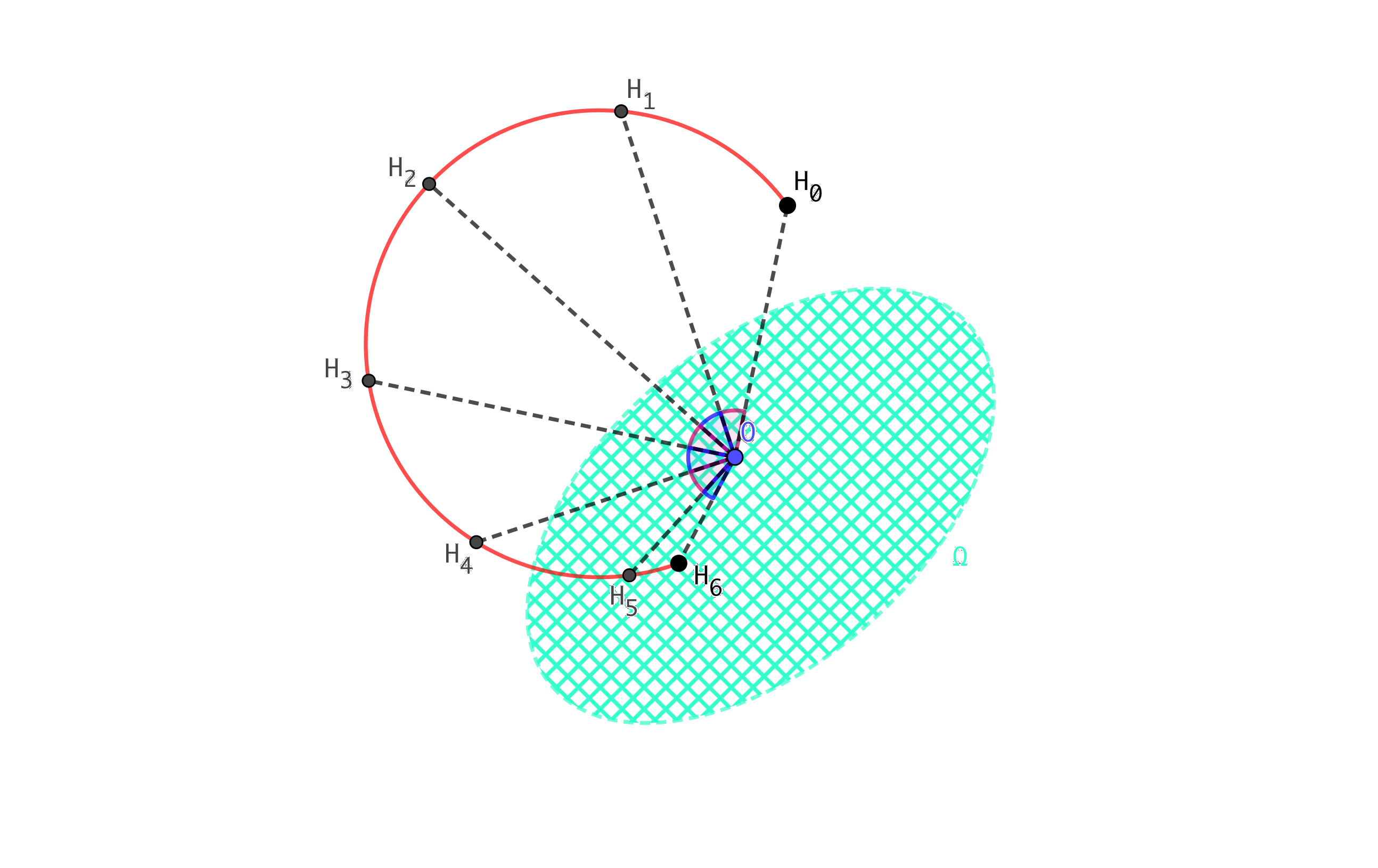}
\caption{In this picture $S_*=\cos\frac{\pi}{6},\:m=6$ and $H_j:=H(t_j),\,j=0,\ldots,5.$}
%\label{cones}
\end{center}
\end{figure}

\begin{rmk}\em\label{rmk:H2}
 Condition (\ref{H2}) means that there exist $m$ cones in $\R^d$ such 
that the axis of every cone, that is, the straight line passing through the 
apex about which the whole cone has a circular symmetry, is the line 
between $O=(0,\ldots,0)$ and $\eta_j,%=\frac{H(t_j)}{|H(t_j)|},
\;j=0,\ldots,
m-1$. Moreover, a straight line passing through the apex is contained in the 
cone if the angle between this line and the axis of the cone is less than 
$\pi/4$. Indeed, the inequality (\ref{H2}), that is 
$H(t) \cdot\eta_j>  \cos\vartheta^*\vert H(t)\vert$ for some $\vartheta^*\in(0,\frac{\pi}{4}),$
is equivalent to the fact that the angle between $H(t)$ and $\eta_j$ is less 
than %or equal to 
$\pi/4.$ 
Thus,  $H(t)$ is contained in the same cone 
$\forall t\in[t_j,t_{j+1}].$ Let us note that it can occur that $\eta_i=\eta_j,$
for $i\neq j.$
\end{rmk}

Let $\delta_{\Omega}=diam(\Omega)=\displaystyle \sup_{x,y\in\overline{\Omega}}|x-y|.$
Let us fix $S_*\in(1/\sqrt{2},1),$ $r>0$ and define 
\begin{equation}\label{xj}
x_j:=-R_j\eta_j,\;\;\;j=0,\ldots,m-1,
\end{equation}
where $\eta_j$ is defined in (\ref{etaj}) and 
\begin{equation}\label{Rj}
\left\{ \begin{array}{rl}
R_j&=2^jR_0+(2^j-1)(\delta_\Omega+r),\\
R_0&= 
\frac{1+S_*}{1-S_*}%(3+2 \sqrt{2})
 \delta_{\OOO}.
%(3+2\sqrt{2})\delta_\Omega .
\end{array}\right.
\end{equation}
%with $R_0\geq (3+2\sqrt{2})\delta_\Omega,$
% fixed real number. 
 We note that from (\ref{Rj}) it follows that
$$x_j\not\in%\Omega
\overline{\OOO},\qquad j=0,\ldots,m-1.$$

\begin{figure}[htbp]
\label{points}
\begin{center}
\hskip-.5cm
\includegraphics[width=12.25cm, height=6.75cm]{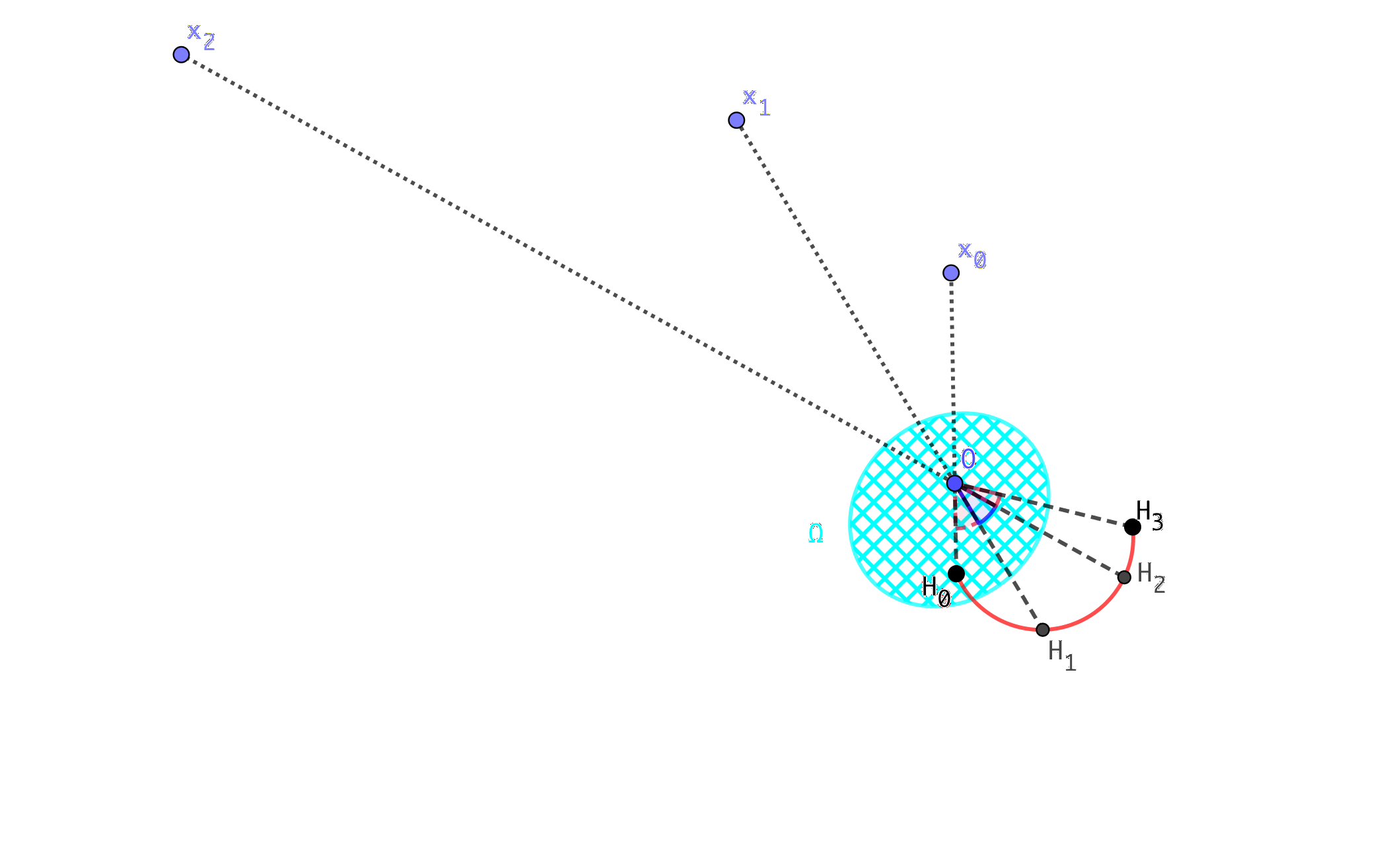}
\caption{In this picture $S_*=\cos\frac{\pi}{6},\:m=3$ and $H_j:=H(t_j),\,j=0,1,2.$}
\label{points}
\end{center}
\end{figure}
For every $j=0,\ldots,m-1,$ let us define
\begin{equation}\label{wf4}
%M_j=
M_\Omega(x_j):=\max_{x\in\overline{\Omega}}|x-x_j|\qquad \mbox{ and }  \qquad
%\mu_j=
d_\Omega(x_j):=\min_{x\in\overline{\Omega}}|x-x_j|.
\end{equation}
%In Section \ref{sec wf} we prove the following Lemma.
%\begin{lem}\label{M<mj}
%Let $\; x_j=-R_j\eta_j,\;\;j=0,\ldots,m-1,$ with $R_j$ defined as in (\ref{Rj}). Then
%\begin{equation}\label{wf3}
%M_\Omega(x_j)=\max_{x\in\overline{\Omega}}|x-x_j|<
%%=:
%\min_{x\in\overline{\Omega}}|x-x_{j+1}|=d_\Omega(x_{j+1})
%%\mu_{j+1}
%,\qquad j=0,\ldots,m-2.
%\end{equation}
%\end{lem}
%\begin{rem}
%\rm We note that the points $x_j, \;j=0,\ldots,m-1,$ are further away from $\bf 0$ as $j$ increases.
%\end{rem}

%A consequence of
%Lemma \ref{M<mj} is that 
\begin{rmk}
 \em 
 The choice of the $R_j$'s in (\ref{Rj}) (see Lemma~\ref{M<mj} below and Figure~\ref{points}) guarantees that the points $x_j$'s are located sufficiently far away from $\OOO$ and %in a position by 
at   in\-creasing distances from the origin.
\end{rmk}

By the choice of the finite sequence $R_j=\vert \xxxj\vert$ in (\ref{Rj}) ($R_j$ sufficiently large 
compared with $\delta_{\OOO}$) 
%enough large respect to $\delta_\Omega,$
we deduce in Lemma \ref{Prx0j} below that %it yields that
$$(x+R_j\eta_j)\cdot\eta_j\ge%\frac{\sqrt{2}}{2}
S_*|x+R_j\eta_j|,\;\; \forall 
x\in \overline{\OOO}.$$
% $x_j=-R_j\eta_j$ with 
%some $R_j>0$ chosen so that $x_j\not\in\overline{\OOO}$ and
%\begin{equation}\label{cone}
%\angle C_{min}(\xxxj) \le\frac{\pi}{4}\,.
%\end{equation}
%Here $C_{min}(\xxxj)$ denotes 
In other words, the apex angle of the minimum cone 
with the apex $\xxxj$ which includes $\OOO$ is less than $2\arccos S_*(<\pi/2)$ (see Figure~\ref{angle}).

\begin{figure}[htbp]
\label{angle}
\begin{center}
\hskip-.5cm
\includegraphics[width=10.7cm, height=5.9cm]{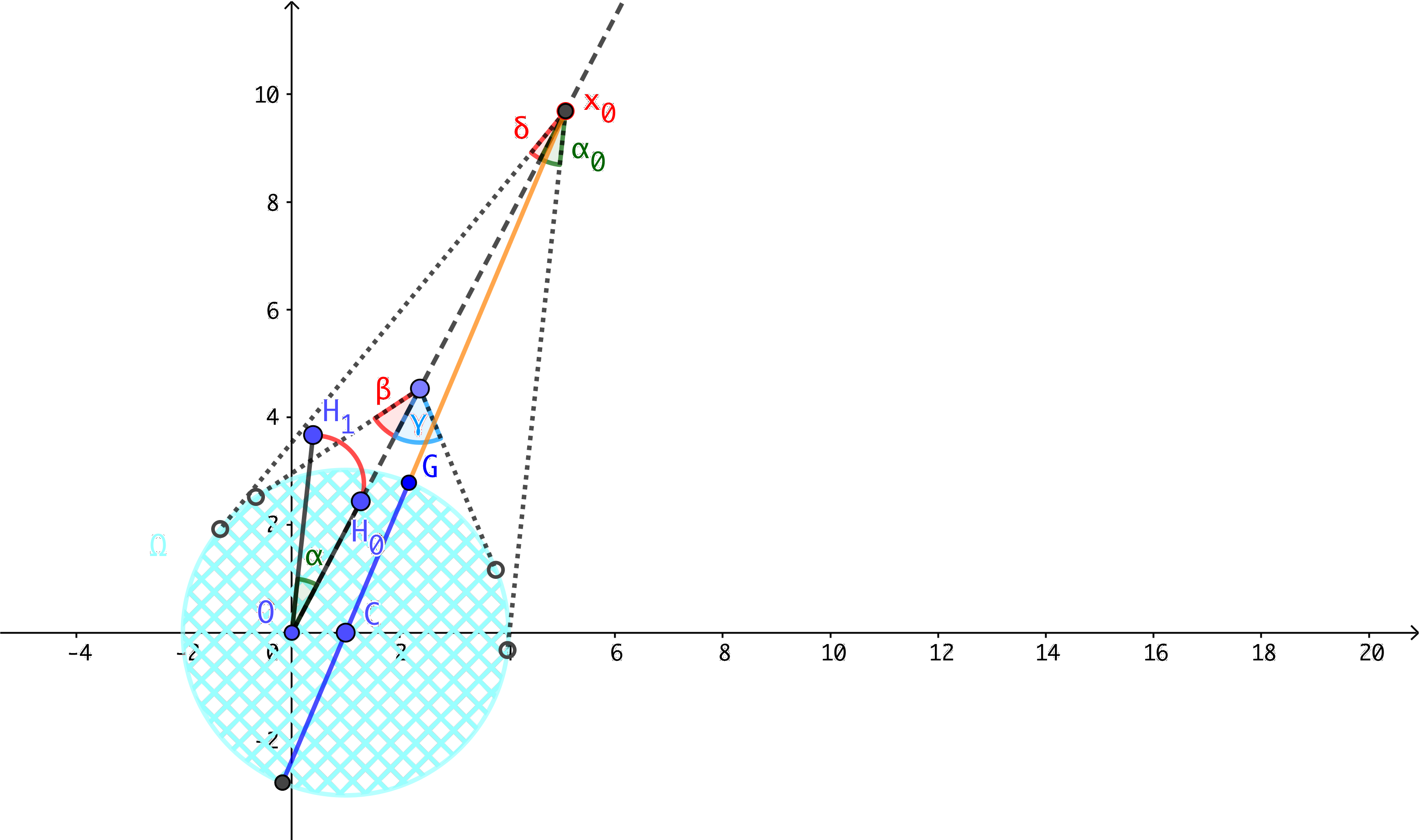}
\caption{In this picture: $\OOO := \{(x,y)\in R^2:\vert (x,y)-(1,0)\vert <3\},\:C=(1,0),$\, $S_*=\cos\alpha\in(\frac{1}{\sqrt{2}},1),\:m=1,$ $H_j:=H(t_j),\,j=0,1,$  and $\beta,\gamma>\alpha,\,\alpha_0=\alpha, \delta\leq\alpha.$ We note that $d_\Omega(x_0)=\dist(x_0,G)$ and $M_\Omega(x_0)=d_\Omega(x_0)+6.$}
\label{angle}
\end{center}
\end{figure}

\bigskip
We now introduce the weight function $\va(x,t)$, to be used in our Carleman estimate, as follows. First, we define $\va$ on $%Q = 
\ooo{\OOO}\times [0,T)$ setting, for every $x\in%\Omega
\ooo{\OOO},$ 
\begin{equation}\label{wf}
\va(x,t) = \va_j(x,t) := -\beta (t-t_j) + \vert x-\xxxj\vert^2, \;
\;t\in [t_{j}, t_{j+1}), \; j=0,\ldots, m-1,% \eqno{(1.8)}
\end{equation}
where 
\begin{equation}\label{Hdot}
\beta:=%(\sqrt{2}-1)
(2S_*^2-1)H_0d_\Omega(x_0),
\end{equation}
 with $H_0$ and $d_\Omega(x_0)$ defined by (\ref{H1}) and $(\ref{wf4}),$ respectively. Then we extend $\va$ to $%Q = 
\ooo{\OOO}\times [0,T]$ by continuity. Observe that $\va$ is piecewise smooth in $t$ and smooth in $x$.
%will be chosen later, $x_j=-R_j\eta_j$ with 
%some $R_j>0$ chosen so that $x_j\not\in\overline{\OOO}$ and

\subsection*{Main results
%Carleman estimate with a piecewise continuous
%weight function
}
In this article, under condition (\ref{H1}), we establish an observability 
inequality for (\ref{E1})
which estimates the $L^2$-norm of $u(x,0)$ by lateral boundary data
$u\vert_{\ppp\OOO\times (0,T)}$ under some conditions on 
$H(t)$ (see Theorem \ref{thm obs}). This observability inequality %, showed in Section \ref{Sec obs}, 
is a consequence of the following %a particu\-lar 
Carleman estimate.
% with a piecewise 
%continuous
%weight function, obtained in the following Theorem \ref{Carleman}.
\begin{thm}\label{Carleman}%[Carleman estimate]
Let $u\in H^1(Q)$ be a solution of equation (\ref{E1}),
%such that 
%$u(\cdot,T)=0,$ 
where $H\in C^1([0,T];\R^d)$ satisfies (\ref{H1}).
%Let us consider the equation (\ref{E1}) under the condition (\ref{H1}).
Let $\{t_j\}_0^m$ be a partition of $[0,T]$ satisfying (\ref{H2}). 
%given by Proposition \ref{Pr1}.
 %such that holds (\ref{H2}). 
 Then, there exist constants $s_0, C_0, C > 0$ %and $C_0, C>0$ 
such that for all $s > s_0$ we have
 \begin{eqnarray}%\label{CarlIne}
s^2\!\!\!\!\!\!\!\!\!&&\int_Q\vert u\vert^2 e^{2s\va} dxdt
+ se^{-C_0s}\sum_{j=0}^{m-1}\int_{\OOO} \vert u(x,t_j)\vert^2 dx
                                      \nonumber\\
&\le& C\int_Q \vert Pu\vert^2 \weight dxdt
+ Cse^{Cs}%\sum_{j=1}^m 
\int_{\Sigma%_j
} \vert u\vert^2 d\gamma dt+ Cse^{%2
 C s
 %M_*^2
 }\int_{\OOO}\vert u(x,T)\vert^2\,dx, \nonumber
%\label{Carl}
\end{eqnarray}
where $\varphi(x,t):Q\longrightarrow\R$ is the %smooth 
weight function defined in (\ref{wf}), and
\begin{equation}\label{eq:subb}
\Sigma = \{ (x,t) \in \ppp\OOO \times(0,T):\,
\thinspace H(t)\cdot \nu(x) \ge 0 \}
\end{equation}
is the subboundary of all exit points for $H$.
\end{thm}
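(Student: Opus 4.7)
The plan is to prove the estimate slab-by-slab on $Q_j:=\Omega\times(t_j,t_{j+1})$ using the smooth weight $\varphi_j$, then sum over $j$, tracking carefully the distributional contributions produced by the jumps of $\varphi$ at the interior partition points. On each $Q_j$ a direct computation gives $P\varphi_j=-\beta+2H(t)\cdot(x-x_j)$. The angle condition (\ref{H2}) together with the consequence of (\ref{Rj}) that $(x-x_j)\cdot\eta_j\ge S_*|x-x_j|$ for $x\in\overline\Omega$, combined with the elementary inequality that two unit vectors both having $\eta_j$-component at least $S_*$ have mutual inner product at least $2S_*^2-1$, yields $H(t)\cdot(x-x_j)\ge(2S_*^2-1)H_0|x-x_j|$; in view of (\ref{Hdot}) this gives $P\varphi_j\ge\beta>0$ on $\overline{Q_j}$.

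Setting $w:=e^{s\varphi_j}u$ on $Q_j$ and $L:=\partial_t+H(t)\cdot\nabla$, the conjugation identity $e^{s\varphi_j}Pu=Lw-s(P\varphi_j)w$ combined with $2wLw=L(w^2)$ gives
\begin{equation*}
|Pu|^2 e^{2s\varphi_j}=|Lw|^2-s(P\varphi_j)L(w^2)+s^2(P\varphi_j)^2 w^2.
\end{equation*}
Integrating over $Q_j$, integrating by parts in $t$ and in $x$ (using $\div H\equiv0$), discarding $\int|Lw|^2\ge0$, and absorbing the lower-order term $s\int L(P\varphi_j)w^2$ into the principal $s^2\int(P\varphi_j)^2w^2\ge s^2\beta^2\int w^2$ for $s\ge s_0$, one obtains the slab inequality
\begin{align*}
& \int_{Q_j}|Pu|^2 e^{2s\varphi_j}\,dxdt + s\!\int_{t_j}^{t_{j+1}}\!\!\int_{\partial\Omega}\!(P\varphi_j)e^{2s\varphi_j}|u|^2 H\cdot\nu\, d\gamma dt \\
& \quad + s\!\int_\Omega (P\varphi_j)(x,t_{j+1})e^{2s\varphi_j(x,t_{j+1})}|u(x,t_{j+1})|^2\, dx \\
& \ge\ c s^2\!\int_{Q_j}|u|^2 e^{2s\varphi_j}\,dxdt + s\!\int_\Omega (P\varphi_j)(x,t_j)e^{2s\varphi_j(x,t_j)}|u(x,t_j)|^2\, dx.
\end{align*}

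Summing over $j=0,\ldots,m-1$, the spatial boundary splits into $\Sigma$ (where $H\cdot\nu\ge0$) and its complement; on $\Sigma$ the term is bounded above by $Cse^{Cs}\int_\Sigma|u|^2$ using $|P\varphi_j|,|\varphi_j|\le C$, while on the complement $H\cdot\nu<0$ gives a favorable sign and the term is simply discarded. The terminal term at $t=T$ is absorbed into $Cse^{Cs}\int_\Omega|u(x,T)|^2\,dx$. The initial term at $t=0$ satisfies $s(P\varphi_0)(x,0)e^{2s\varphi_0(x,0)}|u(x,0)|^2\ge s\beta|u(x,0)|^2\ge se^{-C_0 s}|u(x,0)|^2$, yielding the $j=0$ summand on the left-hand side of the theorem. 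At each interior partition point $t_j$ ($1\le j\le m-1$) the contributions from the two adjacent slabs combine into
\begin{equation*}
s\!\int_\Omega\bigl[(P\varphi_j)(x,t_j)e^{2s\varphi_j(x,t_j)}-(P\varphi_{j-1})(x,t_j)e^{2s\varphi_{j-1}(x,t_j)}\bigr]|u(x,t_j)|^2\, dx.
\end{equation*}
The doubling choice (\ref{Rj}) (Lemma~\ref{M<mj}) forces $d_\Omega(x_j)^2>M_\Omega(x_{j-1})^2$ with a uniform positive gap; since $\varphi_j(\cdot,t_j)=|\cdot-x_j|^2\ge d_\Omega(x_j)^2$ while $\varphi_{j-1}(\cdot,t_j)\le M_\Omega(x_{j-1})^2$ on $\overline\Omega$, for $s\ge s_0$ the bracket is bounded below by $(\beta/2)e^{2s d_\Omega(x_j)^2}\ge\beta/2\ge e^{-C_0 s}$, producing the remaining summands.

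The main obstacle is securing simultaneously the slab-wise positivity $P\varphi_j\ge\beta$ and the strict positivity of the interior-jump brackets; both rest on the geometric placement of the $x_j$'s on the rays $\{-t\eta_j:t>0\}$ at the exponentially growing distances (\ref{Rj}). Once these geometric facts are in hand, the remainder is the standard slab-by-slab Carleman calculus, performed first on smooth $u$ and extended to $H^1(Q)$ by density.
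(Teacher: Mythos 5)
Your proposal is correct and follows essentially the same route as the paper: conjugation with $w=e^{s\varphi_j}u$ on each slab $Q_j$, positivity of $P\varphi_j$ from the cone conditions (\ref{H2}) and (\ref{wf0}) together with the choice (\ref{Hdot}) of $\beta$, integration by parts in $t$ and $x$ keeping only the outflow part $\Sigma$ of the lateral boundary, and telescoping of the time-boundary terms, with the interior jump brackets at $t_j$ made positive for large $s$ exactly via Lemma~\ref{M<mj} ($M_\Omega(x_{j-1})<d_\Omega(x_j)$), as in the paper's treatment of $q_j$. The only differences are cosmetic (you absorb the full lower-order term $sL(P\varphi_j)w^2$ rather than keeping the favorable $2s|H|^2w^2$ contribution, and add an unneeded density step), so nothing of substance changes.
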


%\subsection*{Application to the observability inequality}\label{Sec obs}

%Let $g\in L^2(\ppp\OOO\times (0,T))$ and let us consider the following problem
%\begin{equation}\label{pb obs}
%\left\{ \begin{array}{rl}
%& \ppp_tu + H(t)\cdot \nabla u = 0 \quad 
%\mbox{in $Q:= \OOO\times (0,T)$},\\
%& u\vert_{\ppp\OOO \times (0,T)} = g.
%\end{array}\right.
%\end{equation}
%{\textcolor{red}{
%After introducing some further notations, 
We now give the
%Then, let show the statement of an 
observability inequality for the equation (\ref{E1}).
%that permits to obtain the uniqueness for the corresponding inverse problem by measurement on the boundary.
\begin{thm}\label{thm obs}
Let $g\in L^2(\ppp\OOO\times (0,T))$ and let us consider the following problem
\begin{equation}\label{pb obs}
\left\{ \begin{array}{rl}
& \ppp_tu + H(t)\cdot \nabla u = 0 \quad 
\mbox{in $Q:= \OOO\times (0,T)$},\\
& u\vert_{\ppp\OOO \times (0,T)} = g.
\end{array}\right.
\end{equation}
Let us suppose that there exists a partition $\{t_j\}_0^m$ of $[0,T]$ associated to $H(t)$ satisfying (\ref{H2}) such that the following condition holds
\begin{equation}\label{max cond} 
\max_{0\le j\le m-1}\frac{(t_{j+1}-t_j)d_\Omega(x_j)}{M^2_\Omega(x_j)}>\frac{1}{H_0%(\sqrt{2}-1)
(2S_*^2-1)},
\end{equation}
where $M_\Omega(x_j), d_\Omega(x_j)$ and $H_0$ are defined in $(\ref{wf4})$ and $(\ref{H1}),$ respectively.
%There exists $T_0>0$ such that if $T>T_0,$
%{\bf Theorem 3.1.}\\
%{\it
%then there exists a 
Then, there exists
a constant $C>0$ such that the following inequality holds %such that 
$$
\Vert u(\cdot,t)\Vert_{L^2(\OOO)} \le 
C\Vert g\Vert_{L^2(\ppp\OOO\times (0,T))}, \quad 0\le t \le T,
$$ 
for any $u \in H^1(Q)$ satisfying (\ref{pb obs}). 
%with $g\in 
%L^2(\ppp\OOO\times (0,T))$.
\end{thm}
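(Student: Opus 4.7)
The plan is to combine the Carleman estimate of Theorem \ref{Carleman} with a standard energy estimate for the transport equation, the latter being available essentially for free because $H$ depends only on $t$. Multiplying equation (\ref{E1}) by $u$, integrating over $\Omega$, and using the divergence theorem gives
\[
\frac{d}{dt}\|u(\cdot,t)\|^2_{L^2(\Omega)} = -\int_{\partial\Omega}(H(t)\cdot\nu(x))\,g^2(x,t)\,d\gamma.
\]
Integrating in $t$ then yields the energy comparison
\[
\big|\|u(\cdot,t)\|^2_{L^2(\Omega)} - \|u(\cdot,t')\|^2_{L^2(\Omega)}\big| \le \|H\|_{L^\infty(0,T)}\,\|g\|^2_{L^2(\partial\Omega\times(0,T))}, \quad \forall\,t,t'\in[0,T],
\]
so it suffices to bound $\|u(\cdot,t^\ast)\|_{L^2(\Omega)}$ for one well-chosen time $t^\ast$.

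Since $Pu=0$, Theorem \ref{Carleman} reduces to
\[
s\,e^{-C_0 s}\sum_{j=0}^{m-1}\int_\Omega u^2(x,t_j)\,dx \le C s\,e^{C s}\,\|g\|^2_{L^2(\Sigma)} + C s\,e^{C s}\,\|u(\cdot,T)\|^2_{L^2(\Omega)}.
\]
Let $j^\ast$ attain the maximum in (\ref{max cond}); I would keep only the term $j=j^\ast$ on the left and use the energy comparison above to replace $\|u(\cdot,T)\|^2$ on the right by $\|u(\cdot,t_{j^\ast})\|^2+C\|g\|^2$. The problem then reduces to an absorption argument for $\|u(\cdot,t_{j^\ast})\|^2$.

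The engine of this absorption is precisely condition (\ref{max cond}): combined with (\ref{Hdot}) and (\ref{wf}), it forces $\min_{x\in\overline{\Omega}}\varphi(x,t_{j^\ast})\ge d_\Omega(x_{j^\ast})^2>0$ at the start of the critical interval $[t_{j^\ast},t_{j^\ast+1}]$ and $\max_{x\in\overline{\Omega}}\varphi(x,t_{j^\ast+1})<0$ at its end, producing the exponential gap in $\varphi$ that is needed for absorption. Carefully tracking how the constants $C_0$ and $C$ in Theorem \ref{Carleman} depend on these extrema of $\varphi$, and choosing $s$ large enough so that the coefficient of $\|u(\cdot,t_{j^\ast})\|^2$ on the left dominates the one appearing on the right after the energy substitution, is the main technical hurdle: a crude reading of the Carleman inequality would not allow absorption, and the sharp dependence of the constants on the weight must be exploited. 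Once this is carried out, one obtains $\|u(\cdot,t_{j^\ast})\|_{L^2(\Omega)}\le C\|g\|_{L^2(\partial\Omega\times(0,T))}$, and the first-step energy comparison extends this bound to every $t\in[0,T]$, giving the claimed observability inequality.
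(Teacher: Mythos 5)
Your overall architecture (energy estimate for the $t$-only coefficient $H$, plus a Carleman estimate exploited on the interval $[t_{j^*},t_{j^*+1}]$ singled out by (\ref{max cond}), plus propagation to all of $[0,T]$) matches the paper, and your energy identity is exactly Lemma \ref{lem energy}. You have also correctly identified what (\ref{max cond}) buys: $\va_{j^*}(\cdot,t_{j^*})>0$ on $\ooo\OOO$ while $\va_{j^*}(\cdot,t_{j^*+1}^-)<0$, hence a quantitative sign gap $\pm\delta$.

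However, the step you label as ``the main technical hurdle'' is not a hurdle to be overcome by tracking constants --- as you set it up, it genuinely fails. In the global Carleman estimate on $Q$ the final-time term $Cse^{Cs}\int_\OOO|u(x,T)|^2dx$ carries, when traced through the proof, the weight $e^{2s\va_{m-1}(x,T)}$ with $\va_{m-1}(x,T)=-\beta(T-t_{m-1})+|x-x_{m-1}|^2$; this exponent is governed by the \emph{last} interval and by $M_\Omega(x_{m-1})$, and nothing in (\ref{max cond}) makes it negative (the hypothesis only controls the interval $j^*$, which need not be $m-1$). Meanwhile the term you want to keep on the left carries the weight $e^{2s\mu_{j^*}^2}$ with $\mu_{j^*}=d_\Omega(x_{j^*})$, and by Lemma \ref{M<mj} one has $M_\Omega(x_{m-1})>\mu_{j^*}$ in general, so after substituting $\Vert u(\cdot,T)\Vert^2\le\Vert u(\cdot,t_{j^*})\Vert^2+C\Vert g\Vert^2$ the coefficient on the right is exponentially \emph{larger} than the one on the left for every large $s$: no choice of $s$ permits absorption, and the gap between $\va(\cdot,t_{j^*})$ and $\va(\cdot,t_{j^*+1})$ that you invoke never enters this comparison. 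The missing device is the one the paper uses: restrict to $Q^*=\OOO\times(t_{j^*},t_{j^*+1})$ and apply the Carleman estimate not to $u$ but to $v=\chi u$, where $\chi$ is a cut-off equal to $1$ on $[t_{j^*},t_{j^*+1}-2\ep]$ and $0$ near $t_{j^*+1}$. Then $v(\cdot,t_{j^*+1})=0$, so the final-time term disappears altogether; the price is the source $Pv=u\,\ppp_t\chi$, which is supported precisely where $\va<-\delta$ and hence contributes only $K_1e^{-2s\delta}\Vert u\Vert^2_{L^2(Q^*)}$. Bounding $\Vert u\Vert^2_{L^2(Q^*)}$ by $T\,E(t_{j^*})+CT\Vert g\Vert^2$ via the energy estimate and bounding the left side below by $s^2e^{2s\delta}\ep\,(E(t_{j^*})-H_*\Vert g\Vert^2)$, one absorbs the $e^{-2s\delta}E(t_{j^*})$ term for $s$ large and concludes $E(t_{j^*})\le C\Vert g\Vert^2$. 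Without this cut-off (or an equivalent mechanism that removes the final-time term, e.g.\ applying the Carleman estimate on $Q^*$ alone so that the final-time weight is $e^{2s\va_{j^*}(\cdot,t_{j^*+1})}\le e^{-2s\delta}$), your argument does not close.
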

%The presence in the statement of Theorem \ref{thm obs} of the threshold time $T_0$ requires some important comments that are given in Section \ref{threshold time}, but 

Assumption  (\ref{max cond}) is meant to guarantee that the 
orbit $\{%\alpha'
H(t)\in\R^d~:~t\in[0,T]
 %0 \le t \le T
 \}$ 
 intersects $\ppp\OOO.$
In the following example, we show that this or a similiar condition is indeed necessary: observability fails without some extra assumption.\\

In the following, for $\eta>0$ we consider $\Omega_\eta:=\{z\in\R^2:\,|z|<\eta\}.$
\begin{ex}\label{cex}\rm
Let $\sigma>0$ and $\rho\in(0,2\sigma/3).$ Let $\Omega:=\Omega_{\rho}$ and let $f\in  C^1(\ooo{\OOO}_\sigma;\R)$ be such that $\supp(f)\subset \Omega_{\rho/2}\subseteq \ooo{\OOO}_\sigma %\in C^1(\ooo{\OOO};\R)
$ and let $\alpha(t)%=(\alpha_1(t),\alpha_2(t)) 
= (\rho\cos t, \rho\sin t),\,t\in[0,2\pi].$
We set 
$$v(x,y,t) = f(x-\rho\cos t,y-\rho\sin t).$$
Thus, % with 
% \in C^2([0,T];\R^2)$, 
$v$ satisfies (\ref{E1}), where $H(t) = \alpha'(t)$, $0 \le t \le T$, and $v$ vanishes at the boundary of $\OOO_\sigma$. So,
\begin{equation}\label{spt0}
\left\{ \begin{array}{rl}
& \ppp_tv + \alpha'(t)\cdot \nabla v = 0 \quad 
\mbox{in $%Q:= 
\OOO_\sigma\times (0,T)$},\\
& v\vert_{\ppp\OOO_\sigma \times (0,T)} = g,
\end{array}\right.
\end{equation}
with $g\equiv0.$
We note that $|\alpha'(t)|=\rho>0$ and, for $t\in[0,T]$, the support of $v(\cdot,\cdot,t)$ is %the following
\begin{equation}\label{spt}
%\left\{ \begin{array}{rl}
\supp(v(\cdot,\cdot,t))=%\Omega_{\frac{\rho}{2}}(\rho \cos t, \rho\sin t):
\left\{(x,y)\in\R^2:\,\left|(x-\rho \cos t, y-\rho\sin t)\right|<\frac{\rho}{2}\right\}.
%\end{array}\right.
\end{equation}
%Thus (\ref{E1}) is related to an inverse problem of determining
%a moving source $f$ with given $\alpha$ in the diffusion equation
%$$
%\partial_tw = \Delta w + f(x-\alpha(t)), \quad x\in \Omega, \, 0<t<T.
%$$
Then, from (\ref{spt0}) and (\ref{spt}) it follows that %cannot hold 
 observability fails.\hfill $\square$
% of Theorem \ref{thm obs} because a contradiction it occurs, that is %since for any constant $C>0,$
%$$
%\forall\, C>0\;\;\;\; 0<\Vert v(\cdot,\cdot,t)\Vert_{L^2(\OOO_\sigma)} \le 
%C\Vert g\Vert_{L^2(\ppp\OOO_\sigma\times (0,T))}=0, \quad 0\le t \le T.
%$$ 
%It show that the assumption (\ref{max cond}) is essential in Theorem \ref{thm obs}.
\end{ex}

%\textcolor{red}{Put here the Picture 4}\\
%\vspace{0.2cm}

%\textcolor{red}{
%{\bf Interpretation of (\ref{large time}).}\\
%In general, $m \in \N$ can be large.
%For example we consider $\OOO = \{\vert x\vert < R\}$.  Then we can see that 
%$M_0 = \mu_0 + 2R$.  By (14), we see that 
%$\mu_1 > M_0 = \mu_0 + 2R$ and $M_1 = \mu_1 + 2R$.  Hence
%$M_1 > \mu_0 + 2(2R)$.
%Continuing this, we obtain $M_{m-1} > \mu_0 + 2Rm$.  
%Therefore (\ref{large time}) implies
%$$
%T > \frac{(\mu_0+2Rm)^2}{\beta}> \frac{2\mu_0 2Rm}{\beta}
%= \frac{4R\mu_0}{\beta}m.
%                                           $$
%That is, the observation time $T$ must be at least proportional to $m$.
%This fact suggests that 
%if $\arg H(t)$ changes very rapidly for $0\le t \le T$, then 
%$m$ may be arbitrarily large independently of $T>0$, so that (32) cannot
%hold. \\
%\textcolor{red}{Put here Picture 4}
%\begin{cex}\rm
% For simplicity, we set $H(t) = (\cos \tau_0 t, \sin \tau_0 t)$
%with some constant $\tau_0>0$.
%Then, in Lemma ?.?, as is directly seen, we have 
%$$
%m \approx T\left( \frac{4}{\pi}\frac{1}{\tau_0}\right)^{-1}
%= \frac{\pi}{4}\tau_0T.
%$$
%Hence (32) requires
%$$
%T > \frac{4R\mu_1}{\beta}\left(\frac{4}{\pi}\tau_0T-1\right).
%$$
%If $\tau_0>0$ is sufficiently large, that is, $H(t)$ changes angles 
%rapidly,
%then this suggests that condition (\ref{large time}) is impossible.
%%\\
%???
%\end{cex}
%}
%\vspace{0.5cm}

%\subsubsection*{Some comments}
We conclude this introduction with some comments on our main results.
\begin{enumerate}

\item One could establish an estimate similar to the one in Theorem \ref{thm obs} with the 
maximum norm by the method of characteristics.  Our proof is based on  
Carleman estimates, which naturally provide $L^2$-estimates for
solutions over $\Omega \times \{ t\}$.  The method of cha\-racteristics does not
yield such global $L^2$-estimates directly.
$L^2$-estimates, not estimates in the maximum norm, 
are related to  exact controllability and are more flexi\-bly applied to other 
pro\-blems such as inverse problems, although we  discuss no such aspects in this paper.

\item Although, due to the simplicity of equation (\ref{E1}), the method of characteristics can be easily applied to explain the validity of observability results, the one point we would like to stress is the fact that, in this paper, we intend to derive a Carleman estimate under minimal assumptions. Essentially, we want to give an explicit construction of the weight function that only depends on the lower bound (\ref{H1}) and the modulus of continuity of $H.$

\item It is worth noting that Theorem~\ref{thm obs}  aims at the determination of
the solution $u$ on the whole cylinder  $\Omega \times [0,T]$, not only of $u(\cdot,0)$ in $\Omega$.
For this reason, in Theorem \ref{thm obs}, we have to measure data on the whole lateral boundary $\partial\Omega \times
(0,T)$, not just on a subboundary as we did for the Carleman estimate in Theorem~\ref{Carleman}---where, however, 
the norm of  $u(\cdot,T)$ in $\Omega$ is included. The fact that measurements on the whole boundary are necessary  to majorize $u$ on  $\Omega \times [0,T]$ can be easily understood by looking at the representation solutions given by characteristics.

\item Another purpose of this paper is to single out  an  assumption which suffices to derive observability from a Carleman estimate. We do so with condition (\ref{max cond}), which has a clear geometric meaning: one requires $H(t)$ not to oscillate too much for enough time, giving an explicit evaluation of such a time. We do not pretend our method to provide the optimal evaluation of the  observabi\-lity time. %, see Example ?? . 
On the other hand, Example \ref{cex} shows that some assumption is needed for obser\-vability: (\ref{max cond}) is an example of a sufficient quantitative condition for the  observability of solutions on $\Omega \times [0,T]$.
\end{enumerate}

%We can establish an estimate similar to Theorem \ref{thm obs} with the 
%maximum norm by the method of characteristics.  Our proof is based on the 
%Carleman estimate, which naturally provides an $L^2$-estimate of 
%solutions over $\Omega \times \{ t\}$.  The method of cha\-racteristics does not
%yield such a global $L^2$-estimates directly.
%The $L^2$-estimate, not an estimate in the maximum norm, 
%is related to the exact controllability and more flexi\-bly applied to other 
%pro\-blems such as inverse problems, although we do not here discuss details.\\
%%Namely, 
%Although due to the simplicity of equation (\ref{E1}) the method of characteristics can be easily applied to explain the observability result as
%one can expect for our problem, the one point we would like to stress is the fact that, in this paper, we intend to derive a Carleman estimate under minimal assumptions. Essentially, we wanted to give an explicit construction of the weight function that only depends on the lower bound (\ref{H1}) and the modulus of continuity of $H.$\\

\subsubsection*{Main references and  outline of the paper}
Carleman estimates for transport equations are proved in 
Gaitan and Ouzzane \cite{GO}, G\"olgeleyen and Yamamoto \cite{GY},
Klibanov and Pamyatnykh \cite{KP}, 
Machida and Yamamoto \cite{MM} to be applied to inverse problems of 
determining spatially varying coefficients, where coefficients of
the first-order terms in $x$ are assumed not to depend on $t$.
%The results in \cite{GO}, \cite{GY} and \cite{MM}
%are $L^2$-stability estimates in $\Omega$ for inverse problems.
In order to improve results for inverse problems 
by the application of Carleman estimates, we need 
a better choice of the weight function in the Carleman estimate.
The works \cite{GO} and \cite{KP} use one weight function which is 
very conventional
for a second-order hyperbolic equation %{\color{red} 
but seems less useful to derive 
analogous results  for a time-dependent function $H(t)$.  Our choice is more similar 
to the one in \cite{MM} and \cite{GY}, but even these
papers allow no time dependence for $H$.
Although it is very difficult to choose the best possible weight function 
for the partial differential equation under consideration,
our choice (\ref{wf}) of the weight function seems more adapted for the 
nature of the transport equation (\ref{E1}).

As is commented above, the method of characteristics is applicable to 
inverse problems for first-order hyperbolic systems as well as 
 transport equations and we refer for example to 
Belinskij \cite{Be} and Chapter 5 in Romanov \cite{R}, which discuss an 
inverse problem of determining an $N\times N$-matrix $C(x)$ in 
$$
\partial_tU(x,t) + \Lambda\partial_xU(x,t) + C(x)U(x,t) = F(x,t),
\quad 0<x<\ell, \, t>0
$$
with a suitably given matrix $\Lambda$ and vector-valued function $F$.
The works \cite{Be} and \cite{R} apply the method of 
characteristics to prove the uniqueness and the existence 
of $C(x)$ realizing extra data of $U$ provided that $\ell>0$ is 
sufficiently small.

The method by Carleman estimates for establishing both energy estimates 
like Theorem 1.6 and inverse problems of determining 
spatial varying functions is well-known for hyperbolic and 
parabolic equations and we refer to 
Beilina and Klibanov \cite{BK}, Bellassoued and Yamamoto \cite{BY},
Yamamoto \cite{Y}.  
\\

The plan of the paper is the following. 
In Section \ref{SecCarleman}, we prove the Carleman estimate (Theorem \ref{Carleman}). In Section \ref{Sec obs}, we obtain the observability inequality (Theorem \ref{thm obs}). 
%In Section \ref{threshold time} there is the explanation about the threshold time $T_0$ to obtain the observability. 
Finally, in the Appendix we put the proof of Lemma \ref{Pr1}.

\section{Proof of the Carleman estimate}\label{SecCarleman}
%Without loss of generality let us suppose that ${\bf 0}=(0,\ldots,0)\in\overline{\Omega}$.
%Otherwise we can translate $\Omega$ suitably to reduce to the case of
%${\bf 0} \in \ooo{\OOO}$.  
Let $S_*\in\left(\frac{1}{\sqrt{2}},1\right)$ and %Then, by Lemma \ref{Pr1}, we can choose 
$\{t_j\}_0^m$ a partition 
of $[0,T]$ associated to $H(t)$ such that (\ref{H2}) is satisfied. 
%Let us set $\displaystyle \eta_j:=\frac{H(t_j)}{|H(t_j)|},\;j=0\ldots,m-1,$ 
%and $r_{\Omega}:=diam(\Omega)=\displaystyle \sup_{x,y\in\overline{\Omega}}|x-y|$.
\subsection{Some preliminary lemmas %$\varphi$
}\label{sec wf}
%We define the weight function $\va(x,t)$ on $%Q = 
%\ooo{\OOO}\times [0,T)$ %as the continuous function in the points $(x,T)$ 
%as follows 
%%\begin{equation}\label{wf}
%$$
%\va(x,t) = \va_j(x,t) := -\beta t + \vert x-\xxxj\vert^2, \;
%x\in%\Omega
%\ooo{\OOO},\;t\in [t_{j}, t_{j+1}), \; j=0,1,..., m-1,% \eqno{(1.8)}
%$$%\end{equation}
%where $\beta\in\R$ will be chosen later,
%% $x_j=-R_j\eta_j$ with 
%%some $R_j>0$ chosen so that $x_j\not\in\overline{\OOO}$ and
%\begin{equation}\label{cone}
%\angle C_{min}(\xxxj) \le\frac{\pi}{4}.
%\end{equation}
%Here $C_{min}(\xxxj)$ denotes the minimum cone 
%with the apex $\xxxj$ which includes $\OOO$ and $\angle C_{min}(\xxxj)$ 
%denotes the apex angle of the cone $C_{min}(\xxxj)$.\\
%We remark that condition (\ref{cone}) is equivalent to
%$$
%(x-x_j)\cdot\eta_j\ge\frac{\sqrt{2}}{2}|x-x_j|,\;\; \forall x\in \overline{\OOO},
%$$
%that is,
%\begin{equation}\label{wf0}
%(x+R_j\eta_j)\cdot\eta_j\ge\frac{\sqrt{2}}{2}|x+R_j\eta_j|,\;\; \forall 
%x\in \overline{\OOO}.
%\end{equation}
%Such $x_j=-R_j\eta_j$ exists if we choose $\vert \xxxj\vert=R_j$ 
%sufficiently large 
%compared with $\delta_{\OOO}$.
%In the following lemma we give a lower estimate for $R_j.$ 
%\begin{lem}\label{Prx0j}
%If $R_j\ge (3+2 \sqrt{2})\delta_{\OOO}$, 
%then $x_j=-R_j\eta_j$ satisfies the condition (\ref{wf0})\,.
%\end{lem}
\begin{lem}\label{Prx0j}
Given $R_j,\,j=0,\ldots,m-1,$ as in %\ge (3+2 \sqrt{2})\delta_{\OOO}
 %as in
  (\ref{Rj}), 
then %$x_j=-R_j\eta_j$ satisfies the condition 
\begin{equation}\label{wf0}
(x+R_j\eta_j)\cdot\eta_j\ge S_*%\frac{\sqrt{2}}{2}
|x+R_j\eta_j|,\;\; \forall 
x\in \overline{\OOO},
\end{equation}
where $\eta_j$ are defined in (\ref{etaj}).
%(\ref{wf0})\,.
\end{lem}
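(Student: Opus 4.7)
The target inequality is purely geometric: since $\eta_j$ is a unit vector, $(x+R_j\eta_j)\cdot\eta_j$ is the component of $x-x_j$ along $\eta_j$ (recall $x_j=-R_j\eta_j$), so \eqref{wf0} expresses that the angle between $x-x_j$ and $\eta_j$ is at most $\arccos S_*$. My plan is to derive it from two crude triangle-inequality estimates that turn out to be exactly balanced by the definition of $R_0$ in \eqref{Rj}.

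First I would invoke the standing assumption $\mathbf{0}\in\overline{\OOO}$ to get $|x|\le \delta_{\OOO}$ for every $x\in\overline{\OOO}$. Applying the Cauchy--Schwarz inequality with the unit vector $\eta_j$ then yields
$$
(x+R_j\eta_j)\cdot\eta_j \;=\; R_j+x\cdot\eta_j \;\ge\; R_j-\delta_{\OOO},
$$
while the triangle inequality gives
$$
|x+R_j\eta_j|\;\le\; R_j+|x|\;\le\; R_j+\delta_{\OOO}.
$$
Hence a sufficient condition for \eqref{wf0} is the scalar inequality
$$
R_j-\delta_{\OOO}\;\ge\; S_*\bigl(R_j+\delta_{\OOO}\bigr),
$$
which rearranges to $R_j\ge \frac{1+S_*}{1-S_*}\,\delta_{\OOO}=R_0$.

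It remains to check that the values prescribed by \eqref{Rj} satisfy $R_j\ge R_0$ for every $j=0,\ldots,m-1$. This is an equality for $j=0$; for $j\ge 1$ the recursion $R_j=2^j R_0+(2^j-1)(\delta_{\OOO}+r)$ together with $r>0$ yields $R_j\ge 2 R_0>R_0$, so the bound holds (strictly for $j\ge 1$). I do not expect any real obstacle: the argument reduces to two one-line applications of the triangle inequality, and the only ``clever'' ingredient is the specific value of $R_0$, which has been tuned in \eqref{Rj} precisely so that the resulting scalar condition is saturated in the worst case ($j=0$, $x$ diametrically opposite to $\eta_j$).
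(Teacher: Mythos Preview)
Your proposal is correct and follows essentially the same approach as the paper: bound the left side below by $R_j-\delta_{\OOO}$ via Cauchy--Schwarz, bound the right side above by $S_*(R_j+\delta_{\OOO})$ via the triangle inequality, and then observe that the resulting scalar condition $R_j\ge \frac{1+S_*}{1-S_*}\delta_{\OOO}$ is exactly the definition of $R_0$ and is inherited by all $R_j$. Your verification that $R_j\ge R_0$ for $j\ge 1$ is slightly more explicit than the paper's, but otherwise the arguments are identical.
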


%The proof of Lemma \ref{wf0} is given in Section \ref{sec wf}.\\

\begin{proof} % (of Lemma \ref{Prx0j}).
For every $x\in \overline{\OOO}$, we have 
$|x|=|x-{\bf 0}|\le \delta_{\OOO}$ since ${\bf 0}\in\overline{\OOO}$, and 
\begin{equation}\label{wf1}
S_*
%\frac{\sqrt{2}}{2} 
|x+R_j\eta_j|\le %\frac{\sqrt{2}}{2}
S_*\left(|x|+R_j|\eta_j|\right)=S_*%\frac{\sqrt{2}}{2}
\left(|x|+R_j \right)\le S_*%\frac{\sqrt{2}}{2}
\left( 
\delta_{\OOO}+R_j\right),
\end{equation}
and, since $-x\cdot\eta_j\le |x\cdot\eta_j|\le |x||\eta_j|=|x|
\le \delta_{\OOO},$
\begin{equation}\label{wf2}
(x+R_j\eta_j)\cdot\eta_j=x\cdot\eta_j+R_j \eta_j\cdot\eta_j
=x\cdot\eta_j+R_j\ge R_j-|x|\ge R_j-\delta_{\OOO}.
\end{equation}
From (\ref{wf1}) and (\ref{wf2}) it follows that a sufficient condition for the inequality (\ref{wf0}) is the following
$$ 
R_j-\delta_{\OOO}\ge %\frac{\sqrt{2}}{2}
S_*( \delta_{\OOO}+R_j),
$$
that is,
$R_j\ge \frac{1+S_*}{1-S_*}%(3+2 \sqrt{2})
 \delta_{\OOO}.$ For every $j=1,\ldots,m-1,$ the last condition is verified by $R_j$ defined as in (\ref{Rj}).
\smartqed
\qed
\end{proof}

%\begin{rem}\label{x0j}
%\rm We note that if ${\bf 0}\not\in\Omega,$ chosen $x^*\in\Omega$ as a landmark% point and let us set $\delta^*:=\delta_{\OOO}+|x^*|,$
%  if $R_j\ge (3+2 \sqrt{2}) \delta^*$ thus $x_j=-R_j\eta_j$ satisfies the con%dition (\ref{wf0})\,. This is consequence of the proof of the Proposition \ref{%Prx0j} since 
%  $$|x|\le |x-x^*|+|x^*|\le \delta_{\OOO}+|x^*|= \delta^*, \;\forall x\in\OOO.
%$$
%  The best $\delta^*$ is equal to $ \delta_{\OOO}+dist({\bf 0},\Omega).$
%\end{rem}
%
%In addition to the condition (\ref{wf0}), assumed
%by $R_j\ge (3+2 \sqrt{2})\delta_{\OOO}$, we choose 
%$x_j,\;j=0,\ldots,m-1$ such that
%\begin{equation}\label{wf3}
%M_{j} < \mu_{j+1}, \quad j=0,1, ..., m-2,   
%\end{equation}
%where 
%\begin{equation}\label{wf4}
%M_j := \max_{x\in \ooo{\OOO}} \vert x-\xxxj\vert\quad \mbox{ and } 
%\quad\mu_{j} := \min_{x\in \ooo{\OOO}} \vert x-x_0^{j}\vert,
%\quad j=0,\ldots,m-1.
%\end{equation}
%%For the following
%Thereafter, let us set
%$$
%\mu_0 := \min_{j=0,\ldots,m-1} \mu_{j} \qquad\mbox{ and }\qquad   M_*
%:=\max_{j=0,\ldots,m-1} M_j.
%$$
By the definition (\ref{Rj}) of the sequence $\{R_j\}$ the following Lemma \ref{M<mj} follows.
\begin{lem}\label{M<mj}
Let $\; x_j=-R_j\eta_j,\;\;j=0,\ldots,m-1,$ with $R_j$ defined as in (\ref{Rj}). Then
\begin{equation}\label{wf3}
M_\Omega(x_j)=\max_{x\in\overline{\Omega}}|x-x_j|<
%=:
\min_{x\in\overline{\Omega}}|x-x_{j+1}|=d_\Omega(x_{j+1})
%\mu_{j+1}
,\qquad j=0,\ldots,m-2.
\end{equation}
\end{lem}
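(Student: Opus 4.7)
The plan is to reduce the statement to an elementary triangle-inequality argument that exploits the geometric growth of the radii $R_j$ built into (\ref{Rj}). First I would use the standing hypothesis $\mathbf{0}\in\ooo{\OOO}$ (together with $\delta_\OOO=\mathrm{diam}(\OOO)$) to obtain $|x|\le \delta_\OOO$ for every $x\in\ooo{\OOO}$. Since $|\eta_j|=1$ and $x_j=-R_j\eta_j$, the triangle and reverse triangle inequalities then give
\[
|x-x_j|=|x+R_j\eta_j|\le |x|+R_j\le \delta_\OOO+R_j,
\]
\[
|x-x_{j+1}|=|x+R_{j+1}\eta_{j+1}|\ge R_{j+1}-|x|\ge R_{j+1}-\delta_\OOO,
\]
whence $M_\OOO(x_j)\le R_j+\delta_\OOO$ and $d_\OOO(x_{j+1})\ge R_{j+1}-\delta_\OOO$.

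The key step is then the direct computation, from the recursion in (\ref{Rj}),
\[
R_{j+1}-R_j \;=\; 2^j R_0+2^j(\delta_\OOO+r)\;=\;2^j(R_0+\delta_\OOO+r)\;\ge\; R_0+\delta_\OOO+r,
\]
so that
\[
d_\OOO(x_{j+1})-M_\OOO(x_j)\;\ge\;(R_{j+1}-R_j)-2\delta_\OOO\;\ge\; R_0+r-\delta_\OOO.
\]
Finally, since $S_*>1/\sqrt{2}>0$ one has $R_0=\tfrac{1+S_*}{1-S_*}\delta_\OOO>\delta_\OOO$, and $r>0$ by choice, so the right-hand side is strictly positive. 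This yields the desired strict inequality (\ref{wf3}) for every $j=0,\dots,m-2$.

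\textbf{Expected obstacle.} I do not expect a real obstacle here: Lemma \ref{M<mj} is essentially a verification that the choice (\ref{Rj}) of $R_j$ was tailor-made to produce the separation $M_\OOO(x_j)<d_\OOO(x_{j+1})$. The one point that deserves attention is that the hypothesis $\mathbf{0}\in\ooo{\OOO}$ is crucial, because the diameter $\delta_\OOO$ a priori only bounds differences of points in $\OOO$ and not norms of individual points; without placing the origin inside $\ooo{\OOO}$, the bounds $M_\OOO(x_j)\le R_j+\delta_\OOO$ and $d_\OOO(x_{j+1})\ge R_{j+1}-\delta_\OOO$ would have to be replaced by weaker estimates and the strict inequality could fail for small $j$.
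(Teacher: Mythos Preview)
Your proof is correct and is exactly the verification the paper has in mind: the paper itself gives no argument beyond the sentence ``By the definition (\ref{Rj}) of the sequence $\{R_j\}$ the following Lemma~\ref{M<mj} follows,'' and your triangle-inequality computation (using $\mathbf{0}\in\ooo{\OOO}$ to get $|x|\le\delta_\OOO$, then $M_\OOO(x_j)\le R_j+\delta_\OOO$, $d_\OOO(x_{j+1})\ge R_{j+1}-\delta_\OOO$, and $R_{j+1}-R_j\ge R_0+\delta_\OOO+r$) is precisely the intended expansion of that sentence.
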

%\begin{proof}
%
%\smartqed
%\qed
%\end{proof}
%Then, we can choose $\beta\in(0,\sqrt{2}\,H_0\mu_0)$.\\
%\begin{rem}
%\rm We note that the points $x_j, \;j=0,\ldots,m-1,$ are further away from $\bf 0$ as $j$ increases.
%\end{rem}
%A consequence of 
By Lemma \ref{M<mj} (see also Figure \ref{points})
%is that the choice of the $R_j$'s guarantees that the points $x_j$'s are located %in a position by 
%to distance in\-creasing, respect to $j$, from the origin.
%So, 
we deduce
\begin{equation}\label{Maxmin}
\max_{j=0,\ldots,m-1}M_\Omega(x_j)=M_\Omega(x_{m-1})%M_*%M_{m-1}
\qquad \mbox{ and }\qquad \min_{j=0,\ldots,m-1}d_\Omega(x_j)=d_\Omega(x_0).%\mu_0,
 \end{equation}

\begin{lem}\label{Pr Hdot}
Let $\; x_j=-R_j\eta_j,\;\;j=0,\ldots,m-1,$ with $R_j$ defined as in (\ref{Rj}).
%Given $R_j,\,j=1,\ldots,m-1,$ as in %\ge (3+2 \sqrt{2})\delta_{\OOO}
% %as in
%  (\ref{Rj}), 
%
%If the points $x_j=-R_j\eta_j,\;j=0,1,\ldots,m-1,$ satisfy condition (\ref{wf0}), 
%
Then,
$$
H(t)\cdot (x-\xxxj) \ge %\frac{\sqrt{2}}{2}
%\arccos(
%(2S_*^2-1)
C_*\,H_0\,%\mu_0
d_\Omega(x_0),
%>(\sqrt{2}-1)H_0\,%\mu_0
%d_\Omega(x_0), 
\quad
t_{j} \le t \le t_{j+1}, \thinspace j=0,\ldots, m-1, \;\; x\in \ooo{\OOO},   
$$
where $C_*=2S_*^2-1>0$ and $\displaystyle\,H_0=\min_{t\in[0,T]}|H(t)|>0.$ 
\end{lem}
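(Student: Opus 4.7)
The strategy is purely geometric: write $x - x_j = x + R_j\eta_j$ and show that this vector makes a small angle with $\eta_j$, while $H(t)$ also makes a small angle with $\eta_j$; combine the two angle bounds to conclude that $H(t)$ and $x - x_j$ are nearly aligned, which yields the desired lower bound on their scalar product.

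More precisely, I would proceed as follows. First, since $x_j = -R_j\eta_j$, Lemma~\ref{Prx0j} gives
\[
\frac{x - x_j}{|x - x_j|}\cdot\eta_j \;\geq\; S_*, \qquad \forall\, x\in\overline{\Omega},
\]
while assumption (\ref{H2}) for the partition $\{t_j\}_0^m$ gives
\[
\frac{H(t)}{|H(t)|}\cdot\eta_j \;\geq\; S_*, \qquad \forall\, t\in[t_j,t_{j+1}].
\]
Thus both unit vectors lie in the cone of half-aperture $\arccos S_* < \pi/4$ about $\eta_j$. The main (routine) step is to convert this into a lower bound on their own inner product. The cleanest way is to decompose
\[
\frac{H(t)}{|H(t)|} = (\tfrac{H(t)}{|H(t)|}\cdot\eta_j)\eta_j + w_1, \qquad
\frac{x-x_j}{|x-x_j|} = (\tfrac{x-x_j}{|x-x_j|}\cdot\eta_j)\eta_j + w_2,
\]
where $w_1\perp\eta_j$, $w_2\perp\eta_j$, and $|w_i|\le \sqrt{1-S_*^2}$. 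Taking the scalar product and applying Cauchy--Schwarz to the orthogonal components gives
\[
\frac{H(t)}{|H(t)|}\cdot\frac{x-x_j}{|x-x_j|} \;\geq\; S_*^2 - (1-S_*^2) \;=\; 2S_*^2-1 \;=\; C_* \;>\;0,
\]
the positivity coming from the standing assumption $S_*\in(1/\sqrt{2},1)$.

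Finally, multiplying through by $|H(t)|\,|x - x_j|$ and using $|H(t)|\ge H_0$ together with $|x-x_j|\ge d_\Omega(x_j)\ge d_\Omega(x_0)$ (the latter being (\ref{Maxmin})) gives
\[
H(t)\cdot(x-x_j) \;\geq\; C_*\,H_0\,d_\Omega(x_0),
\]
uniformly for $t\in[t_j,t_{j+1}]$, $j=0,\ldots,m-1$, and $x\in\overline{\Omega}$, which is the claim. No step is really an obstacle; the only thing to be careful about is the \emph{addition-of-angles} argument, which here takes the form of the simple algebraic identity above and is where the hypothesis $S_*>1/\sqrt{2}$ is used in an essential way to ensure $C_*>0$.
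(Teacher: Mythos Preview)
Your proof is correct and follows essentially the same approach as the paper: both arguments use Lemma~\ref{Prx0j} and condition~(\ref{H2}) to place the two unit vectors $(x-x_j)/|x-x_j|$ and $H(t)/|H(t)|$ in the cone of half-aperture $\vartheta^*=\arccos S_*$ about $\eta_j$, and then bound their inner product by $\cos 2\vartheta^*=2S_*^2-1$. The paper states this last step geometrically (two vectors within angle $\vartheta^*$ of a common direction are within $2\vartheta^*$ of each other), whereas you carry out the equivalent algebraic decomposition---but the content is identical.
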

\begin{proof}
Let $\vartheta^*\in(0,\pi/4)$ be such that $\cos\vartheta^*=S_*.$ For $t\in[t_{j}, t_{j+1}],\, j=0,\ldots, m-1$, from (\ref{wf0}) and Remark~\ref{rmk:H2} we deduce that
$$
%2
H(t)\cdot (x-\xxxj) 
%=2\vert H(t)\vert \vert x-\xxxj\vert \cos\angle (x-\xxxj, 
%H(t))%\ge \sqrt{2}\,H_0m_{j}
\ge %\sqrt{2}
%\arccos(
\cos2\vartheta^*%2S_*
%)
\,H_0 d_\Omega(x_j)\ge %sqrt{2}
(2S_*^2-1)
\,H_0 d_\Omega(x_0)
%\mu_0
, \quad x\in \ooo{\OOO}
$$
which is our conclusion.
\smartqed
\qed
\end{proof}

%where, for simplicity of notation, %let us set
%%\begin{equation}\label{M*}
%$M_*:=M_\Omega(x_{m-1}).$\\
%\begin{equation}\label{Hdot}
%%\mu_0 > \sqrt{2}\,H_0T\quad \mbox{ and }\quad
%\beta:= (\sqrt{2}-1)\,H_0\mu_0. %\quad 
%%\frac{T}{m}.
%\end{equation}
%In particular, (\ref{Hdot}) yields
%\begin{equation}\label{17}
%\mu_j^2 - \beta t_j \ge \mu_0^2 - \sqrt{2}\,H_0\mu_0T%\frac{T}{m}
%> 0.
%\end{equation}

\subsection{Derivation of the Carleman estimate}

After introducing the previous lemmas in Section 
\ref{sec wf}, we are able to prove Theorem \ref{Carleman}. In this section, for simplicity of notation, for $j=0,\ldots,m-1$ let us set 
\begin{equation}\label{aster}
M_j:=M_\Omega(x_j)\qquad\mbox{ and }\qquad
\mu_j:=d_\Omega(x_j),
\end{equation}
 see  (\ref{wf4}) for the definitions of $M_\Omega(x_j)$ and $d_\Omega(x_j)$.
\begin{proof}{(of Theorem \ref{Carleman}).} 
We derive a Carleman estimate on $$Q_j :=\OOO \times (t_j,t_{j+1}),\;\; 0\le j 
\le m-1.$$  Let $w_j := e^{s\va_j}u,$ where $\va_j$ is defined in (\ref{wf}), 
and
\begin{equation}\label{Lwj1}
L_jw_j := e^{s\va_j}P(e^{-s\va_j}w_j).
\end{equation}
By direct calculations, we obtain
\begin{equation}\label{Lwj2}
L_jw_j = \ppp_tw_j + H(t)\cdot \nabla w_j - s(P\varphi_j)w_j
\quad \mbox{in}\;\; \;Q_j,
\end{equation}
where, keeping in mind (\ref{wf}) and the definition of the operator $P$ 
contained in (\ref{E1}),
$$ 
P\varphi_j(x,t)= \ppp_t\va_j + %\alpha'
H(t)\cdot \nabla \va_j 
= -\beta + 2%\alpha'
H(t)\cdot (x-\xxxj), \quad 
0 \le j \le m-1.
$$
By Lemma \ref{Pr Hdot} and (\ref{Hdot}), since $\beta=(2S_*^2-1)H_0\mu_0\in\big(0, 2(2S_*^2-1)H_0\mu_0\big)$ we have
\begin{equation}\label{fj1}
P\varphi_j=-\beta + 2H(t)\cdot (x-\xxxj)\ge \,C_*H_0\mu_0,
\end{equation}
where $C_*=2S_*^2-1$.
Therefore, by (\ref{fj1}) we obtain
\begin{eqnarray}\label{Op}
\int_{Q_j}\vert L_jw_j\vert^2 dxdt
&\ge&  - 2s\int_{Q_j} (P\varphi_j)w_{j}(\ppp_tw_j + H(t)\cdot \nabla w_j) dxdt
\nonumber\\
&+& 
s^2\int_{Q_j} \vert 2H(t)\cdot (x-\xxxj) - \beta\vert^2 |w_j|^2 dxdt
\nonumber\\
&\ge& 
I_1+I_2+C_*^2H^2_0\mu_0^2\,s^2 \int_{Q_j}|w_j|^2 dxdt,
\end{eqnarray}
where
$$
I_1 :=  - 2s\int_{Q_j} (P\varphi_j)w_j\ppp_tw_j dxdt\quad \mbox{ and } \quad  
I_2 : = -2s\int_{Q_j} (P\varphi_j)H(t)\cdot (w_j\nabla w_j) dxdt.
$$
We have
\begin{eqnarray}\label{fj2}
I_1 &=&  - 2s\int_{Q_j} (P\varphi_j)w_j\ppp_tw_j dxdt
= - s\int^{t_{j+1}}_{t_{j}}\int_{\OOO} (P\varphi_j)
\ppp_t(w_j^2) dxdt\nonumber\\
&=& s\int_{\OOO} \left[P\varphi_j(x,t)|w_j(x,t)|^2\right]
^{t=t_{j}}_{t=t_{j+1}} dx
+ s\int_{Q_j} \ppp_t(P\varphi_j(x,t))|w_j|^2 dxdt.
\end{eqnarray}
Recalling (\ref{aster}), we
obtain 
$$
\ppp_t(P\varphi_j(x,t)) = 2(x-x_j)\cdot H^\prime(t)
%= 2\vert x-x_j\vert \vert H'(t)\vert \cos\angle (x-x_j, H'(t))
%$$
%$$
\ge -2M_{m-1}\max_{t\in[0,T]}|H'(t)|=:-H_0'.
$$ 
Consequently, from (\ref{fj2}) we deduce
\begin{equation}\label{fj3}
I_1\ge s\int_{\OOO} \left[P\varphi_j(x,t)|w_j(x,t)|^2\right]
^{t=t_{j}}_{t=t_{j+1}} dx
- s\,H_0'\int_{Q_j} |w_j|^2 dxdt.
\end{equation}
Then, for $I_2$ we deduce 
\begin{eqnarray}
 I_2  &=& -2s\int_{Q_j} (P\varphi_j)H(t)\cdot (w_j\nabla w_j) dxdt
= -s\int^{t_{j+1}}_{t_{j}} \int_{\OOO}P\varphi_j
\sum_{k=1}^d H_k(t)\ppp_k(w_j^2) dxdt\nonumber\\
&=&  s\int^{t_{j+1}}_{t_{j}} \int_{\OOO}
 \sum_{k=1}^d (\ppp_k(P\varphi_j))H_k(t)|w_j|^2 dxdt
- s\int^{t_{j+1}}_{t_{j}} \int_{\ppp\OOO} P\varphi_j(H(t)\cdot\nu(x))
\vert w_j\vert^2 d\gamma dt. \nonumber
\end{eqnarray}
We note that   
\begin{equation}\label{H*}
H(t)\cdot (x-\xxxj)\le|H(t)||x-\xxxj|\le H_*M_*,
\end{equation}
where we set (see (\ref{Maxmin})) $$ M_*=M_{m-1}
%\!\!\!\!\!\max_{j=0,\ldots,m-1} 
%\!\!\max_{x\in \ooo{\OOO}} \vert x-x_0^{j}\vert>0,
\qquad\mbox{ and }\qquad
%\displaystyle 
H_*:=\!\!\max_{t\in[0,T]}|H(t)|>0.$$
Therefore, since $P\varphi_j > 0$ by (\ref{fj1}) and $\ppp_k(P\varphi_j)=2H_k(t)$,
we estimate $I_2$ in the following way: 
\begin{eqnarray}\label{I2}
I_2  
&=&  2s\int^{t_{j+1}}_{t_{j}} \int_{\OOO}%\int_{Q_j}
 \sum_{k=1}^d  H^2_k(t)|w_j|^2 dxdt
- s\int^{t_{j+1}}_{t_{j}} \int_{\ppp\OOO} P\varphi_j(H(t)\cdot\nu(x))
\vert w_j\vert^2 d\gamma dt\nonumber\\
&\ge&  2s\int^{t_{j+1}}_{t_{j}} \int_{\OOO}
 |H(t)|^2|w_j|^2 dxdt \nonumber\\
&-& s \int_{\Sigma_j} (-\beta + 2H(t)\cdot (x-\xxxj))(H(t)\cdot\nu(x))
\vert w_j\vert^2 d\gamma dt\nonumber\\
&\ge&  2\,H_0^2s\int^{t_{j+1}}_{t_{j}} \int_{\OOO} \vert w_j\vert^2
dxdt 
- 2s\int_{\Sigma_j} (H(t)\cdot (x-\xxxj))(H(t)\cdot\nu(x))
\vert w_j\vert^2 d\gamma dt\nonumber\\
&\ge& 2\,H_0^2s\int_{Q_j} \vert w_j\vert^2 dxdt
- 2H_*M_*s \int_{\Sigma_j}|H(t)||\nu(x)| \vert w_j\vert^2 d\gamma dt\nonumber\\
&\ge& 2\,H_0^2s\int_{Q_j} \vert w_j\vert^2 dxdt
- 2H_*^2M_*s \int_{\Sigma_j} \vert w_j\vert^2 d\gamma dt,
\end{eqnarray}
where
$$%\begin{equation}\label{5}
\Sigma_j = \{ (x,t) \in \ppp\OOO \times(t_j,t_{j+1}):
\thinspace H(t)\cdot \nu(x) \ge 0 \}. 
$$%\end{equation}
Hence, by (\ref{Op}), (\ref{fj3}) and (\ref{I2}), we obtain
\begin{eqnarray}%\label{J0}
 \int_{Q_j}\vert L_jw_j\vert^2 dxdt
 %\int^{t_{j+1}}_{t_{j}}\int_{\OOO} &\vert& Pu\vert^2 e^{2s\va_j} dxdt
& \ge & s\int_{\OOO} \left[P\varphi_j(x,t)|w_j(x,t)|^2\right]
^{t=t_{j}}_{t=t_{j+1}} dx
\nonumber\\
&-& H_0' s\,%(\sqrt{2} \mu_0\,H_0^\prime+2\,H_0^2)
\int_{Q_j} |w_j|^2 dxdt 
+ %H^2_0 \mu_0^2\,
C_1 s^2 \int_{Q_j}|w_j|^2 dxdt   \nonumber\\
&-& 2H_*^2M_*s \int_{\Sigma_j} \vert w_j\vert^2 d\gamma dt\,,\nonumber
\end{eqnarray}
for some positive constant $C_1.$
Since $w_j := e^{s\va_j}u,$ from the previous inequality, for $j=0,\ldots,m-1$,
by (\ref{Lwj1}) we deduce that there exists also a positive constant $C_2$ such that
\begin{eqnarray}\label{J0}
\int^{t_{j+1}}_{t_{j}}\int_{\OOO} \vert Pu\vert^2 e^{2s\va_j} dxdt 
 &\ge& s\int_{\OOO}\psi_j(x)dx+(C_1s^2-H_0's)\int_{Q_j} e^{2s\va_j}|u|^2
                       dxdt \nonumber\\
&-&C_2s e^{C_2s}
\int_{\Sigma_j} \vert u\vert^2 d\gamma dt, %C_2\,s^2 \int_{Q_j}|w_j|^2 dxdt
\end{eqnarray}
where $C_1,\,C_2$ are positive constants and
$$\psi_j(x):=
%J:=s\int_{\OOO} 
\left[P\varphi_j(x,t) e^{2s\va_j(x,t)}\,|u(x,t)|^2\right]
^{t=t_{j}}_{t=t_{j+1}}.
$$
By (\ref{wf}) and (\ref{fj1}) we obtain
\begin{eqnarray}
\psi_j(x)&=&\left[\left(2H(t)\cdot (x-\xxxj) - \beta\right)
e^{2s(-\beta (t-t_j) + \vert x-\xxxj\vert^2)}\vert u(x,t)\vert^2 %e^{2s\va_j(x,t)}
%\,|u(x,t)|^2 
\right]^{t=t_{j}}_{t=t_{j+1}}\nonumber\\
&=& (2H(t_{j})\cdot (x-\xxxj) - \beta)
e^{2s%-\beta (t-t_{j}) 
\vert x-\xxxj\vert^2}\vert u(x,t_{j})\vert^2\nonumber\\
&-& (2H(t_{j+1})\cdot (x-\xxxj) - \beta)
e^{2s(-\beta (t_{j+1}-t_j) + \vert x-\xxxj\vert^2)}\vert u(x,t_{j+1})\vert^2.
\end{eqnarray}
Therefore, summing in $j$ from 0 to $m-1$ and keeping in mind that 
$t_0=0$ and $t_m=T$ 
%and $u(x,T)=0$,
 by (\ref{Hdot}) and (\ref{H*}) %and (\ref{17}) 
we have
\begin{eqnarray}\label{psij}
\sum_{j=0}^{m-1}\psi_j(x)&\ge&(2H(0%t_{0}
)\cdot (x-x_0) - \beta)
e^{2s(%-\beta t_{0} + 
\vert x-x_0\vert^2)}\vert u(x,%t_{0}
0)\vert^2%\nonumber\\%&+&
+\sum_{j=1}^{m-1}%e^{-2s\beta t_{j}}
\!q_j(x)
\vert u(x,t_{j})\vert^2\nonumber\\
&-& (2H(T)\cdot (x-x_{m-1}) - \beta)
e^{2s(-\beta (T-t_{m-1}) + \vert x-x_{m-1}\vert^2)}\vert u(x,T)\vert^2
\nonumber\\
&\ge&\mu_0H_0e^{2s%-\beta t_{0} + 
\mu_0^2%\vert x-x_0\vert^2
}\vert u(x,%t_{0}
0)\vert^2 -2M_*H_*e^{2sM_*^2}\vert u(x,T)\vert^2\nonumber
+ \sum_{j=1}^{m-1}%e^{-2s\beta t_{j}}
\!q_j(x)
\vert u(x,t_{j})\vert^2,
\end{eqnarray}
where, for $j=1,\ldots,m-1,$ we set
$$q_j(x):=%\biggl[
(2H(t_{j})\cdot (x-\xxxj) - \beta)
e^{2s \vert x-\xxxj\vert^2}-\left(2H(t_{j})\cdot \left(x-x_{j-1}\right) - \beta\right)
e^{2s \left\vert x-x_{j-1}\right\vert^2}
%\biggl]e^{-2s\beta t_{j}}
.$$
Thus, by (\ref{wf4}), (\ref{aster}), (\ref{fj1}) and (\ref{H*}), we obtain the following estimate
\begin{eqnarray}
q_j(x)\ge\tilde{C}\mu_0H_0e^{2s\mu^2_j}-H_*M_*e^{2sM^2_{j-1}}\nonumber
=\tilde{C}\mu_0H_0e^{2s\mu^2_j}\left(1-\frac{M_*H_*}{\tilde{C}\mu_0H_0}e^{-2s\left(\mu^2_j-M^2_{j-1}\right)}\right).
\end{eqnarray}
Thanks to (\ref{wf3}) (see Lemma \ref{M<mj}), the choice of the points $x_j$ permits to have $\mu_j-M_{j-1}>0$, then we deduce that there exist $s_j>0$ enough large, that is $s_j>\frac{1}{2\left(\mu^2_j-M^2_{j-1}\right)}\log\left(\frac{2H_*M_*}{\tilde{C}\mu_0H_0}\right),\;j=1,\ldots,m-1,$ such that, for every $\displaystyle s>s_0:=%\left\{
\max_{j=1,\ldots,m-1}s_j%,\right\}
,$ we have
%$$
\begin{equation}\label{qj} 
%e^{-2s\beta t_{j}}
q_j(x)\ge 
%e^{-2s\beta t_{j}
%} 
\frac{\mu_0H_0}{2}e^{2s\mu_j^2}\ge 
%e^{-2s\beta T%t_{j}
%} 
\frac{\mu_0H_0}{2}e^{2s\mu_0^2}\ge C_0e^{C_0s},
\end{equation}
for some positive constant $C_0=C_0(s).$ Thus, by (\ref{J0}), (\ref{psij}), and (\ref{qj}) we have that
%with $C_0=C_0(s)=\min\{
% \frac{\mu_0H_0}{
%2}e^{-2s\beta T%t_{j}
%},2m^2_0\}.$
\begin{eqnarray}\label{J1}
% \int_{Q_j}\vert L_jw_j\vert^2 dxdt
\int_{Q} \vert Pu\vert^2 e^{2s\va} dxdt &=&\sum_{j=0}^{m-1} \int^{t_{j+1}}_{t_{j}}\int_{\OOO} \vert Pu\vert^2 e^{2s\va_j} dxdt   \nonumber\\
& \ge& s\sum_{j=0}^{m-1}\int_{\OOO}\psi_j(x)dx+(C_1s^2-H_0's)\sum_{j=0}^{m-1}\int_{Q_j} e^{2s\va_j}  |u|^2 dxdt \nonumber\\
&-&C_2se^{C_2s} \sum_{j=0}^{m-1}
\int_{\Sigma_j} \vert u\vert^2 d\gamma dt %C_2\,s^2 \int_{Q_j}|w_j|^2 dxdt
\nonumber\\
& \ge& C_3{s}^2%\sum_{j=0}^{m-1}
\int_{Q} e^{2s\va_j} 
|u|^2 dxdt -C_2se^{C_2s} \sum_{j=0}^{m-1}
\int_{\Sigma_j} \vert u\vert^2 d\gamma dt
\nonumber\\ &+&C_0se^{C_0s}\sum_{j=0}^{m-1}\int_{\OOO}\vert u(x,t_{j})\vert^2
%\psi_j(x)
dx -%H_*M_*
 C_2se^{%2
 C_2 s
 %M_*^2
 }\int_{\OOO}\vert u(x,T)\vert^2\,dx\nonumber %C_2\,s^2 \int_{Q_j}|w_j|^2 dxdt
\end{eqnarray}
for any $0<C_3<C_1$ and all $s$ sufficiently large.
The last estimate completes the proof of Theorem \ref{Carleman}.
\smartqed
\qed
\end{proof}

\section{Proof of the observability inequality}\label{Sec obs}
Let us give in Section \ref{energysec} two lemmas and in Section \ref{obssec} the proof of Theorem \ref{thm obs}. 
%and in Section \ref{threshold time} some remarks about the presence of the threshold time to obtain the observability.
\subsection{Energy estimates}\label{energysec}
%The proof of Theorem \ref{thm obs} is obtained by the Carleman estimate obtained in Theorem \ref{Carleman} and a usual cut-off function argument. 
%Before the proof of Theorem \ref{thm obs} 
Let us give %it is need 
the following energy estimates.
\begin{lem}\label{lem energy}
Let $g\in L^2(\ppp\OOO\times (0,T))$ and let us consider the problem
$$%\begin{equation}\label{pb obs bis}
\left\{ \begin{array}{rl}
& \ppp_tu + H(t)\cdot \nabla u = 0 \quad 
\mbox{in $Q:= \OOO\times (0,T)$},\\
& u\vert_{\ppp\OOO \times (0,T)} = g.
\end{array}\right.
\qquad\qquad (\ref{pb obs})$$%\end{equation}
Then, for every $t\in[0,T],$
% exists $T_0>0$ such that if $T>T_0,$
%%{\bf Theorem 3.1.}\\
%%{\it
%%then there exists a 
%for some constant $C>0,$ 
the following energy estimates hold %such that 
\begin{equation}\label{en1}
\Vert u(\cdot,t)\Vert^2_{L^2(\OOO)} \le 
\Vert u(\cdot,0)\Vert^2_{L^2(\OOO)}+H_*\Vert g\Vert^2_{L^2(\ppp\OOO\times (0,T))},
\end{equation}%\nonumber\\ 
\begin{equation}\label{en2}
\Vert u(\cdot,0)\Vert^2_{L^2(\OOO)} \le 
\Vert u(\cdot,t)\Vert^2_{L^2(\OOO)}+H_*\Vert g\Vert^2_{L^2(\ppp\OOO\times (0,T))}, 
\end{equation}%\quad 0\le t \le T,
for any $u \in H^1(Q)$ satisfying (\ref{pb obs}), where $\displaystyle H_*:=\!\!\max_{\xi\in[0,T]}|H(\xi)|.$
%with $g\in 
%L^2(\ppp\OOO\times (0,T))$.
\end{lem}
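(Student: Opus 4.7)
The plan is to derive both energy identities from a single integration-by-parts computation, since the equation is first-order and $H$ depends only on $t$. The key observation is that multiplying (\ref{E1}) by $u$ and using $\ddd_x H(t) = 0$ converts the convective term into a pure divergence.

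First I would formally compute
\begin{equation*}
0 = 2u\,(\ppp_t u + H(t)\cdot \nabla u) = \ppp_t(u^2) + \div\bigl(H(t)\,u^2\bigr).
\end{equation*}
Integrating over $\OOO$ and applying the divergence theorem (valid for $u \in H^1(Q)$, whose trace on $\ppp\OOO$ equals $g$) gives the identity
\begin{equation*}
\frac{d}{dt}\int_\OOO |u(x,t)|^2\,dx \;=\; -\int_{\ppp\OOO} |g|^2\, H(t)\cdot\nu(x)\,d\gamma,\qquad t\in(0,T),
\end{equation*}
which, integrated from $0$ to $t$, yields the exact relation
\begin{equation*}
\|u(\cdot,t)\|_{L^2(\OOO)}^2 \;=\; \|u(\cdot,0)\|_{L^2(\OOO)}^2 \;-\; \int_0^t\!\!\int_{\ppp\OOO} |g|^2\, H(\xi)\cdot\nu(x)\,d\gamma\,d\xi.
\end{equation*}

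The estimate (\ref{en1}) then follows immediately by bounding the boundary term using $|H(\xi)\cdot\nu(x)| \le |H(\xi)|\,|\nu(x)| = |H(\xi)| \le H_*$, so that
\begin{equation*}
\left|\int_0^t\!\!\int_{\ppp\OOO} |g|^2\, H(\xi)\cdot\nu(x)\,d\gamma\,d\xi\right| \le H_*\,\|g\|_{L^2(\ppp\OOO\times(0,T))}^2.
\end{equation*}
The estimate (\ref{en2}) comes from rewriting the same identity as
$\|u(\cdot,0)\|_{L^2(\OOO)}^2 = \|u(\cdot,t)\|_{L^2(\OOO)}^2 + \int_0^t\!\int_{\ppp\OOO} |g|^2 H(\xi)\cdot\nu\,d\gamma\,d\xi$
and applying the same bound.

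The only mild obstacle is justifying the divergence-theorem step for $u \in H^1(Q)$ rather than a classical solution: one either invokes the trace theorem (so $u|_{\ppp\OOO\times(0,T)} \in H^{1/2}$, which embeds into $L^2$, and $u^2$ lies in $W^{1,1}$ so Stokes applies) or argues by density with smooth approximations. Once this is handled, neither (\ref{en1}) nor (\ref{en2}) requires any of the geometric machinery (the partition $\{t_j\}$, the points $x_j$, or the weight $\va$) developed earlier in the paper.
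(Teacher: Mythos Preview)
Your proof is correct and follows essentially the same approach as the paper: multiply the equation by $2u$, integrate over $\OOO$, use the divergence theorem to convert the transport term into the boundary integral $-\int_{\ppp\OOO}(H\cdot\nu)|g|^2\,d\gamma$, integrate in $t$, and then bound $|E(t)-E(0)|$ by $H_*\|g\|_{L^2(\ppp\OOO\times(0,T))}^2$ to obtain both (\ref{en1}) and (\ref{en2}). Your additional remark on justifying the integration by parts for $u\in H^1(Q)$ is a welcome clarification that the paper leaves implicit.
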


\begin{proof}
Let $H(t)=(H_1(t),\ldots,H_d(t)),\;t\in [0,T].$
Multiplying the equation in (\ref{pb obs}) by $2u$ and integrating over $\OOO$, we have
$$
\int_{\OOO} 2u\ppp_tu dx + \int_{\OOO}\sum_{k=1}^d H_k(t)2u\ppp_ku dx 
= 0,
$$
then,
$$
\ppp_t\left(\int_{\OOO} \vert u(x,t)\vert^2 dx\right) 
+ \sum_{k=1}^d \int_{\OOO} H_k(t)\ppp_k(\vert u(x,t)\vert^2) dx = 0.
$$
So, integrating by parts, for every $t\in[0,T],$ we obtain
\begin{equation}\label{parts}
\ppp_t\left(\int_{\OOO} \vert u(x,t)\vert^2 dx \right)
=-\sum_{k=1}^d \int_{\ppp\OOO} H_k\vert u\vert^2 \nu_kd\gamma= -\int_{\ppp\OOO} (H\cdot \nu) \vert g\vert^2 d\gamma,
\end{equation}
where $\nu=(\nu_1,\ldots,\nu_d)$ is the unit normal vector outward to the boundary $\ppp\OOO$.
Setting 
$$E(t) := \int_{\OOO} \vert u(x,t)\vert^2 dx,\qquad t\in[0,T],$$ 
by (\ref{parts}), integrating on $[0,t]$  we deduce
$$
\left|E(t) - E(0)\right| = \left|-\int^t_0\int_{\ppp\OOO} 
(H(\xi)\cdot \nu(x)) \vert g(x,\xi)\vert^2 d\gamma d\xi\right|\le H_*\Vert g\Vert_{L^2(\ppp\OOO\times (0,T))}^2
$$
%for $0 \le t \le T$, 
where $\displaystyle H_*=\!\!\max_{\xi\in[0,T]}|H(\xi)|$.
%Therefore there exist constants $C_1, C_2 > 0$ such that 
Thus, for all $t\in[0,T],$ we have
%\begin{equation}\label{en1}
$$
E(t) \le E(0) + H_* \Vert g\Vert_{L^2(\ppp\OOO\times (0,T))}^2,
$$%\end{equation}
and
$$%\begin{equation}\label{en2}
E(0) \le E(t) + H_*\Vert g\Vert_{L^2(\ppp\OOO\times (0,T))}^2.
$$%\end{equation}
%Hence, by (\ref{en2}) we deduce
%\begin{eqnarray}\label{en3}
%\int^{\ep}_0 \int_{\OOO} \vert u\vert^2 dxdt 
%&=& \int^{\ep}_0 E(t) dt
%\ge \int^{\ep}_0 (E(0) - H_*\Vert g\Vert_{L^2(\ppp\OOO\times (0,T))}^2) dt\nonumber\\
%&=& \ep \left(E(0) - H_*\Vert g\Vert_{L^2(\ppp\OOO\times (0,T))}^2\right)
%\end{eqnarray}
%and, by (\ref{en1}) we obtain
%\begin{eqnarray}\label{en4}
%\Vert u\Vert^2_{L^2(Q)} &=& \int^T_0 E(t) dt= \int^T_0 \left(E(0) + H_* \Vert g\Vert_{L^2(\ppp\OOO\times (0,T))}^2\right) dt\nonumber\\
%&\le& E(0)T + H_*T\Vert g\Vert_{L^2(\ppp\OOO\times (0,T))}^2.
%\end{eqnarray}
\smartqed
\qed
\end{proof}
\begin{lem}\label{cor obs}
Let %$T>0,$ 
$0\leq s_1<s_2\leq T,$ $g\in L^2(\ppp\OOO\times (0,T)).$ 
%let us consider the problem (\ref{pb obs}) and %, that is
%\begin{equation}\label{pb obs bis}
%$$\left\{ \begin{array}{rl}
%& \ppp_tu + H(t)\cdot \nabla u = 0 \quad 
%\mbox{in $Q:= \OOO\times (0,T)$},\\
%& u\vert_{\ppp\OOO \times (0,T)} = g.
%\end{array}\right.$$
%\end{equation}
%let
%$0\leq s_1<s_2\leq T.$ %$\;I\subset [0,T]$ %a not empty real interval.
%Let us suppose that for every $t\in [s_1,s_2]$ 
Let us assume that there exists a positive constant $C=C(s_1,s_2)$ such that for every $t\in [s_1,s_2]$ %the subinterval 
%$I$ 
the following observability inequa\-lity holds
\begin{equation}\label{obs sub}
\Vert u(\cdot,t)\Vert_{L^2(\OOO)} \le 
C\Vert g\Vert_{L^2(\ppp\OOO\times (0,T))}, \quad %\mbox{for any} 
\mbox{ for all } u \in H^1(Q)\;  \mbox{ solution to } (\ref{pb obs}). %\forall t\in I, 
\end{equation}
% satisfying (\ref{pb obs}).
%and for some positive constant $C$.\\
% $\displaystyle H_*:=\!\!\max_{\xi\in[0,T]}|H(\xi)|.$
Then, there exists a positive constant $C=C(s_1,s_2,T)$ such that the inequality (\ref{obs sub}) holds for every $t\in[0,T].$
% exists $T_0>0$ such that if $T>T_0,$
%%{\bf Theorem 3.1.}\\
%%{\it
%%then there exists a 
%for some constant $C>0,$ 
%with $g\in 
%L^2(\ppp\OOO\times (0,T))$.
\end{lem}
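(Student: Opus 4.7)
The plan is to combine the hypothesis, which gives observability on the subinterval $[s_1,s_2]$, with an energy estimate of the type of Lemma \ref{lem energy} that controls the deviation of $\|u(\cdot,t)\|_{L^2(\Omega)}$ from $\|u(\cdot,t')\|_{L^2(\Omega)}$ for arbitrary $t,t'\in[0,T]$. The crucial point is that the proof of Lemma \ref{lem energy} can be repeated verbatim with any reference time $t'\in[0,T]$ in place of $0$: multiplying the equation by $2u$, integrating over $\Omega$, and integrating in time from $t'$ to $t$ yields
\begin{equation*}
\bigl|\,\|u(\cdot,t)\|^2_{L^2(\OOO)}-\|u(\cdot,t')\|^2_{L^2(\OOO)}\bigr|
=\left|\int_{t'}^{t}\!\!\int_{\ppp\OOO}(H(\xi)\cdot\nu(x))|g(x,\xi)|^2\,d\gamma d\xi\right|
\le H_*\Vert g\Vert^2_{L^2(\ppp\OOO\times(0,T))},
\end{equation*}
where $H_*=\max_{\xi\in[0,T]}|H(\xi)|$.

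First I would fix any reference time $t^*\in[s_1,s_2]$ (for definiteness, say $t^*=s_1$). By the hypothesis (\ref{obs sub}), we have
\begin{equation*}
\|u(\cdot,t^*)\|_{L^2(\OOO)}\le C(s_1,s_2)\,\Vert g\Vert_{L^2(\ppp\OOO\times(0,T))}.
\end{equation*}
Then, for an arbitrary $t\in[0,T]$, the generalized energy estimate above with $t'=t^*$ gives
\begin{equation*}
\|u(\cdot,t)\|^2_{L^2(\OOO)}\le \|u(\cdot,t^*)\|^2_{L^2(\OOO)}+H_*\Vert g\Vert^2_{L^2(\ppp\OOO\times(0,T))}
\le\bigl(C(s_1,s_2)^2+H_*\bigr)\Vert g\Vert^2_{L^2(\ppp\OOO\times(0,T))}.
\end{equation*}
Taking the square root yields the desired inequality with the constant $\widetilde{C}(s_1,s_2,T):=\sqrt{C(s_1,s_2)^2+H_*}$, which depends on $s_1,s_2$ (through the hypothesis) and on $T$ (through $H_*$).

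There is no real obstacle here: the only subtlety is noticing that the proof of Lemma \ref{lem energy} does not rely on $0$ being the reference time and extends to arbitrary $t'\in[0,T]$, so that the norm of the solution at the ``good'' time $t^*\in[s_1,s_2]$ propagates (up to an additive boundary term) to the norm at any other time. Once this is observed, the conclusion follows by a one-line combination with the assumed sub-interval observability.
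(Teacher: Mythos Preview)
Your proof is correct and follows essentially the same approach as the paper: combine the assumed observability on $[s_1,s_2]$ with the energy identity of Lemma~\ref{lem energy}. The only cosmetic difference is that the paper applies Lemma~\ref{lem energy} as stated (reference time $0$) and therefore splits into the two cases $t\in[0,s_1]$ and $t\in[s_2,T]$, chaining two inequalities in the first case to reach $E(s_1)$ via $E(0)$ (hence obtaining $C^2+2H_*$ rather than your $C^2+H_*$); your observation that the energy identity holds with an arbitrary reference time $t^*\in[s_1,s_2]$ lets you handle all $t\in[0,T]$ in one stroke.
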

\begin{proof}
Let $E(t)=\Vert u(\cdot,t)\Vert^2_{L^2(\OOO)},%= \int_{\OOO} \vert u(x,t)\vert^2 dx,
\; t\in[0,T].$ For every $t\in [0,s_1],$ keeping in mind Lemma \ref{lem energy}, by (\ref{en1}), (\ref{en2}) and (\ref{obs sub}) we obtain
\begin{eqnarray}\label{0s_1}
\Vert u(\cdot,t)\Vert^2_{L^2(\OOO)}=E(t) &\le& E(0) + H_* \Vert g\Vert_{L^2(\ppp\OOO\times (0,T))}^2\le E(s_1) + 2H_* \Vert g\Vert_{L^2(\ppp\OOO\times (0,T))}^2\nonumber\\
&\le& (C^2+2H_*) \Vert g\Vert_{L^2(\ppp\OOO\times (0,T))}^2\;.
%C^2\Vert g\Vert^2_{L^2(\ppp\OOO\times (0,T))}$$
\end{eqnarray}
For every $t\in [s_2,T],$ using again Lemma \ref{lem energy}, by (\ref{en1}) and (\ref{obs sub}) we deduce
\begin{equation}\label{s_1s_2}
\!\!\!\!\Vert u(\cdot,t)\Vert^2_{L^2(\OOO)}\!\!=E(t) \le E(s_2) + H_* \Vert g\Vert_{L^2(\ppp\OOO\times (0,T))}^2%\le E(s_2) + 2H_* \Vert g\Vert_{L^2(\ppp\OOO\times (0,T))}^2
\!\!\le (C^2+H_*) \Vert g\Vert_{L^2(\ppp\OOO\times (0,T))}^2\,.
%C^2\Vert g\Vert^2_{L^2(\ppp\OOO\times (0,T))}$$
\end{equation}
From (\ref{0s_1}) %(\ref{obs sub}) 
and (\ref{s_1s_2}) the conclusion follows.
\smartqed
\qed
\end{proof}
\subsection{The proof}\label{obssec}
\begin{proof}{(of Theorem \ref{thm obs}).}\\
Let $\va$ be the weight function given in (\ref{wf}).
By the assumption (\ref{max cond}) it follows that
%Theorem \ref{Carleman}.
%Let 
%$$\max_{0\le j\le m-1}\frac{(t_{j+1}-t_j)d_\Omega(x_j)}{M^2_\Omega(x_j)}>\frac{1}{H_0(\sqrt{2}-1)},$$
%thus 
there exists $j^*\in\{0,\ldots m-1\}$ such that  
\begin{equation}\label{T0}
\frac{(t_{{j^*}+1}-t_{j^*})d_\Omega(x_{j^*})}{M^2_\Omega(x_{j^*})}>\frac{1}{H_0(2S_*^2-1)
%\sqrt{2}-1)
}.
\end{equation}
%Let 
%\begin{equation}\label{T0}
% T >T_0:=\frac{1}{\beta}\displaystyle
% \max_{x\in\ooo{\OOO}} \big\vert x-x_{m-1}\big\vert^2,
%% \frac{{\displaystyle
%% \max_{x\in\ooo{\OOO}} \big\vert x-x_{m-1}\big\vert^2}}{\beta}.
%\end{equation}
By the definition of the weight function $\va(x,t)$  (see (\ref{wf})), it follows that, %on $Q = \OOO\times (0,T)$ 
for every $x \in \ooo{\OOO},$ 
we have  %$ -\beta t + \vert x-\xxxj\vert^2$
%\begin{equation}\label{phi0}
$$\va(x,t_{j^*})=\va_{j^*}(x,t_{j^*}) =%-\beta t_{j^*}+ 
\vert x-x_{j^*}\vert^2
%=\beta(t_{j^*+1}-t_{j^*})-
%\beta t_{j^*+1}+ \vert x-x_{j^*}\vert^2 
> 0
$$
%\end{equation}
%$\va(x,t_{j^*})=\va_0(x,t_{j^*}) =-\beta t_{j^*}+ \vert x-x_0\vert^2 > 0
%$
 and, since (\ref{T0}) holds, keeping in mind that $\beta=(2S_*^2-1)H_0d_\Omega(x_0),
 %(\sqrt{2}-1)H_0\,d_\Omega(x_{0})
 $
% if
%$
% T >T_0:=\frac{1}{\beta}\displaystyle
% \max_{x\in\ooo{\OOO}} \big\vert x-x_{m-1}\big\vert^2,
%% \frac{{\displaystyle
%% \max_{x\in\ooo{\OOO}} \big\vert x-x_{m-1}\big\vert^2}}{\beta}.
%$
%\begin{equation}\label{phi m-1}
$$
%\va(x,t_{j^*+1})=
\lim_{t\rightarrow ({t_{j^*+1}})\!^-}\va_{j^*}(x,%t_{j^*+1}
t)=
%\vert x-x_{j^*}\vert^2 - \beta t_{j^*+1} =
\vert x-x_{j^*}\vert^2 -
\beta(t_{j^*+1}-t_{j^*})%-\beta t_{j^*}
< 0.
$$
%\end{equation}
 %for $x \in \ooo{\OOO}$ 
%$$
% T >\frac{1}{\beta}\displaystyle
% \max_{x\in\ooo{\OOO}} \big\vert x-x_{m-1}\big\vert^2.
%% \frac{{\displaystyle
%% \max_{x\in\ooo{\OOO}} \big\vert x-x_0\big\vert^2}}{\beta}.
%$$
Therefore, there exist $\displaystyle \varepsilon\in\left(
0,%\min\left\{t_1,
\frac{t_{j^*+1}-t_{j^*}}{2}%\right\}
\right)$ and
%$\ep>0$ and 
$\delta > 0$ such that %$\displaystyle \varepsilon<\min\left\{t_1,\frac{T-t_{m-1}}{2}\right\},$
%$0 < \ep < t_1$ and $t_{m-1} < T-2\ep$, 

\begin{equation}\label{phi delta}
\left\{\begin{array}{rl}
\va(x,t)&=\va_{j^*}(x,t) > \delta, \quad\qquad\;\,\quad  \;\;\;\;t\in[t_{j^*},
 %\le t \le
 t_{j^*}+\ep], \thinspace x \in \ooo{\OOO},\\
\va(x,t)&=\va_{j^*}(x,t) < -\delta, \qquad  
t\in [t_{j^*+1}-2\ep, 
%\le t \le 
t_{j^*+1}), \thinspace x \in \ooo{\OOO}.
\end{array}\right.
\end{equation}
%since (\ref{phi0}) and (\ref{phi m-1}) hold and  %by  $0 < \ep < t_1$ and $t_{m-1} < T-2\ep,$ 
%for every $x\in \ooo{\OOO}$ %we see that 
%$$
%\va(x,t) = \va_0(x,t)\;\mbox{ if }\;t\in[0,\ep]%\;\;0 \le t \le \ep \;\;\; 
%\;\;\;\mbox{ and }\;\;\;  \va(x,t) =\va_{m-1}(x,t) \;\mbox{ if }\;t\in[T-2\ep,T].
%%\quad  T-2\ep < t \le T
%$$
%\va(x,t) = 
%\left\{\begin{array}{rl}
%\va_1(x,t), \quad & 0 \le t \le \ep,\;\qquad \quad \,x\in \ooo{\OOO} \\
%\va_{m-1}(x,t), \quad & T-2\ep < t \le T, \quad x \in %\OOO
%\ooo{\OOO},
%\end{array}\right.
%$$
%and so 
%$$\left\{\begin{array}{rl}
%\va_1(x,t) > \delta, \quad & 0 \le t < \ep, \thinspace x \in \ooo{\OOO},\\
%\va_{m-1}(x,t) < -\delta, \quad & T-2\ep < t \le T, \thinspace 
%x \in \ooo{\OOO}.\end{array}\right.               \eqno{(33)}
%$$
Let $u \in H^1(Q),$ satisfying (\ref{pb obs}) on $Q=\Omega\times(0,T)$. Let us consider $Q^*:=\Omega\times(t_{j^*},t_{j^*+1})\subseteq Q.$
% with $g\in 
%L^2(\ppp\OOO\times (0,T))$.
Now we define a cut-off function $\chi \in C^{\infty}_0([t_{j^*},t_{j^*+1}])$ such that 
$0 \le \chi \le 1$ and 
%\begin{equation}\label{(34)}
$$\chi(t) =
\left\{\begin{array}{rl}
1, \quad & t\in[t_{j^*}, t_{j^*+1}-2\ep], \\
0, \quad & t\in[t_{j^*+1}-\ep, t_{j^*+1}].
\end{array}\right.                   %  \eqno{(34)}
$$%\end{equation}
We set 
\begin{equation}\label{v0}
%$$
v(x,t) = \chi(t) u(x,t),\;\;(x,t)\in Q^*,
\end{equation}%$$
then, keeping in mind (\ref{pb obs}) and (\ref{v0}), we deduce
\begin{equation}\label{v}
\left\{ \begin{array}{rl}
& \ppp_tv + H(t)\cdot \nabla v = u(\ppp_t\chi) \quad\quad 
\mbox{in $\;Q^*%:= \OOO\times (0,T)
$},\\
& v\vert_{\ppp\OOO \times (t_{j^*},\,t_{j^*+1})} = \chi g, \\
& v(x,t_{j^*+1}) = 0, \qquad\qquad\quad\;\;\;\; x \in \OOO.
\end{array}\right.
\end{equation}
Applying Theorem \ref{Carleman} to the problem (\ref{v}), since $v(x,t)\leq u(x,t)$ for every $(x,t)\in Q^*$ (see (\ref{v0})), we obtain
\begin{equation}\label{(35)}
s^2\int_{Q^*} \vert v\vert^2 \weight dxdt \le C\int_{Q^*} \vert u\vert^2
\vert \ppp_t\chi\vert^2 \weight dxdt
+ Ce^{Cs}\int_{\Sigma} \vert u\vert^2 d\gamma dt,    
%(\footnote{We note that $%\begin{equation}\label{5}
%\Sigma^*:= \{ (x,t) \in \ppp\OOO \times(t_{j^*},t_{j^*+1}):\,
%\thinspace H(t)\cdot \nu(x) \ge 0 \}. 
%$%\end{equation}
%})
                                          %  \eqno{(35)}
\end{equation}
for all large $s>0$ and 
for some positive constant $C$.  \\  %t_{j^*},t_{j^*+1}
Therefore, by %(33) and 
(\ref{v0}) and (\ref{phi delta}) we have
\begin{equation}\label{o0}
s^2\int_{Q^*} \vert v\vert^2 \weight dxdt 	\ge  s^2\int^{t_{j^*}+\ep}_{t_{j^*}}\int_{\OOO} \vert u\vert^2 e^{2s\va_0} dxdt \ge s^2e^{2s\delta}\int^{t_{j^*}+\ep}_{t_{j^*}}
%\int^{\ep}_0
\int_{\OOO} \vert u\vert^2 dxdt              %\eqno{(36)}
\end{equation}
and, since $\chi \in C^{\infty}_0([t_{j^*},t_{j^*+1}]),$ we also deduce
% the application of (\ref{phi delta})
%(33) and (\ref{(34)}) 
%yields
\begin{eqnarray}\label{o1}
\!\!\!\!\!\!\!\!\!\!\!\!\int_{Q^*} \vert u\vert^2 \vert \ppp_t\chi\vert^2 \weight dxdt
%= \int^{T-\ep}_{T-2\ep} \int_{\OOO} \vert u\vert^2
%\vert \ppp_t\chi\vert^2 \weight dxdt
&=& \int^{{t_{j^*+1}}-\ep}_{{t_{j^*+1}}-2\ep} \int_{\OOO} \vert u\vert^2
\vert \ppp_t\chi\vert^2 e^{2s\va_{m-1}}\! dxdt\nonumber\\
&\le&  K_1e^{-2s\delta}\int^{{t_{j^*+1}}-\ep}_{
{t_{j^*+1}}-2\ep} \int_{\OOO} \vert u\vert^2 dxdt 
\le K_1\Vert u\Vert^2_{L^2({Q^*})}e^{-2s\delta}\!\!\!\!,           
\end{eqnarray}
for all large $s>0$ and 
for some positive constant $K_1$. \\ 
From (\ref{(35)}), by (\ref{o0}) and (\ref{o1}) we obtain
\begin{equation}\label{o2}
s^2e^{2s\delta}\int^{{t_{j^*}}+\ep}_{{t_{j^*}}} \int_{\OOO} \vert u\vert^2 dxdt
\le C_1\Vert u\Vert^2_{L^2({Q^*})}e^{-2s\delta}
+ C_1e^{C_1s}\Vert g\Vert_{L^2(\ppp\OOO\times (0,T))}^2,
                                       % \eqno{(38)}
\end{equation}
for all large $s>0$ and 
for some positive constant $C_1$. \\
%Multiplying (\ref{pb obs}) by $2u$ and integrating over $\OOO$, we have
%$$
%\int_{\OOO} 2u\ppp_tu dx + \int_{\OOO}\sum_{k=1}^d H_k(t)2u\ppp_ku dx 
%= 0,
%$$
%then,
%$$
%\ppp_t\left(\int_{\OOO} \vert u(x,t)\vert^2 dx\right) 
%+ \sum_{k=1}^d \int_{\OOO} H_k(t)\ppp_k(\vert u(x,t)\vert^2) dx = 0.
%$$
%So, integrating by parts, for every $t\in[0,T],$ we obtain
%\begin{equation}\label{parts}
%\ppp_t\left(\int_{\OOO} \vert u(x,t)\vert^2 dx \right)
%=-\sum_{k=1}^d \int_{\ppp\OOO} H_k\vert u\vert^2 \nu_kd\gamma= -\int_{\ppp\OOO} (H\cdot \nu) \vert g\vert^2 d\gamma,
%\end{equation}
%where $\nu=(\nu_1,\ldots,\nu_d)$ is the unit normal vector outward from the boundary $\ppp\OOO$.
%Setting 
%$$E(t) := \int_{\OOO} \vert u(x,t)\vert^2 dx,\qquad t\in[0,T],$$ 
%by (\ref{parts}), integrating on $[0,t]$  we deduce
%$$
%\left|E(t) - E(0)\right| = \left|-\int^t_0\int_{\ppp\OOO} 
%(H(\xi)\cdot \nu(x)) \vert g(x,\xi)\vert^2 d\gamma d\xi\right|\le H_*\Vert g\Vert_{L^2(\ppp\OOO\times (0,T))}^2
%$$
%%for $0 \le t \le T$, 
%where $\displaystyle H_*:=\!\!\max_{\xi\in[0,T]}|H(\xi)|$.
%%Therefore there exist constants $C_1, C_2 > 0$ such that 
%Thus, for all $t\in[0,T],$ we have
%\begin{equation}\label{en1}
%E(t) \le E(0) + H_* \Vert g\Vert_{L^2(\ppp\OOO\times (0,T))}^2,
%\end{equation}
%and
%\begin{equation}\label{en2}
%E(0) \le E(t) + H_*\Vert g\Vert_{L^2(\ppp\OOO\times (0,T))}^2.
%\end{equation}
Setting 
$$E(t) := \int_{\OOO} \vert u(x,t)\vert^2 dx,\qquad t\in[{t_{j^*}},{t_{j^*+1}}],$$ 
 by the energy estimate (\ref{en2}) of Lemma \ref{lem energy} we deduce
\begin{eqnarray}\label{en3}
\int^{{t_{j^*}}+\ep}_{t_{j^*}} \int_{\OOO} \vert u\vert^2 dxdt 
&=& \int^{{t_{j^*}}+\ep}_{t_{j^*}} E(t) dt
\ge \int^{{t_{j^*}}+\ep}_{t_{j^*}} (E({t_{j^*}}) - H_*\Vert g\Vert_{L^2(\ppp\OOO\times (0,T))}^2) dt\nonumber\\
&=& \ep \left(E({t_{j^*}}) - H_*\Vert g\Vert_{L^2(\ppp\OOO\times (0,T))}^2\right)
\end{eqnarray}
and, by the energy estimate (\ref{en1}) of Lemma \ref{lem energy}  we obtain
\begin{eqnarray}\label{en4}
\Vert u\Vert^2_{L^2({Q^*})} &=& \int^{t_{j^*+1}}_{t_{j^*}} E(t) dt= \int^{t_{j^*+1}}_{t_{j^*}} \left(E({t_{j^*}}) + H_* \Vert g\Vert_{L^2(\ppp\OOO\times (0,T))}^2\right) dt\nonumber\\
&\le& E({t_{j^*}})T + H_*T\Vert g\Vert_{L^2(\ppp\OOO\times (0,T))}^2.
\end{eqnarray}
Substituting (\ref{en3}) and (\ref{en4}) into (\ref{o2}), we have
\begin{eqnarray}\label{en5}
s^2e^{2s\delta} \ep  \left(E({t_{j^*}}) - H_*\Vert g\Vert_{L^2(\ppp\OOO\times (0,T))}^2\right)&\le& s^2e^{2s\delta}\int^{{t_{j^*}}+\ep}_{t_{j^*}} \int_{\OOO} \vert u\vert^2 dxdt \nonumber\\
&\le& C_1\Vert u\Vert^2_{L^2({Q^*})}e^{-2s\delta}
+ C_1e^{C_1s}\Vert g\Vert_{L^2(\ppp\OOO\times (0,T))}^2\nonumber\\
&\le& C_1e^{-2s\delta}
\left(E({t_{j^*}})T + H_*T\Vert g\Vert_{L^2(\ppp\OOO\times (0,T))}^2\right)\nonumber\\
&+&C_1e^{C_1s}\Vert g\Vert_{L^2(\ppp\OOO\times (0,T))}^2,\nonumber
%s^2e^{2s\delta} \ep E(0) 
%- s^2e^{2s\delta}H_*\Vert g\Vert_{L^2(\ppp\OOO\times (0,T))}^2
%\nonumber\\
%&\le& C_1E(0)Te^{-2s\delta} 
%+ C_1H_*T\Vert g\Vert_{L^2(\ppp\OOO\times (0,T))}^2e^{-2s\delta}
%+ C_1e^{C_1s}\Vert g\Vert_{L^2(\ppp\OOO\times (0,T))}^2
\end{eqnarray}
for all large $s>0$.  Hence, for all $s$ large enough,
$$
(s^2e^{2s\delta} \ep - C_1Te^{-2s\delta})E({t_{j^*}}) 
\le \left(C_1e^{C_1s}+s^2e^{2s\delta} \ep H_*+C_1e^{-2s\delta}H_* T\right) \Vert g\Vert_{L^2(\ppp\OOO\times (0,T))}^2
$$
 But, for $s>0$ enough large, 
$s^2e^{2s\delta} \ep - C_1Te^{-2s\delta}>0.$
Thus, using again (\ref{en1}), 
for every $t\in[t_{j^*},t_{j^*+1}],$ we obtain
$$\Vert u(\cdot,t)\Vert_{L^2(\OOO)}=E(t)\le E(t_{j^*}) + H_* \Vert g\Vert_{L^2(\ppp\OOO\times (0,T))}^2 \le 
C_2\Vert g\Vert_{L^2(\ppp\OOO\times (0,T))}, %\quad 0\le t \le T,
$$
for some positive constant $C_2.$
The conclusion of the proof of Theorem \ref{thm obs} follows from the above inequality,  using Lemma \ref{cor obs}
to extend the above obser\-vability inequality from $[t_{j^*},t_{j^*+1}]$ 
%Theorem \ref{thm obs}, that is, for every $t\in
to $[0,T].$
\smartqed
\qed
\end{proof}

\begin{rmk}\em\label{rmk:sub}
By adapting the above proof, one could easily obtain an observability inequality for $u(\cdot,0)$ on $\Omega$,   requiring measurements just on the subboundary $\Sigma$ defined in (\ref{eq:subb}).
\end{rmk}

%\section*{Appendix%: proof of Proposition \ref{Pr1}
%}\label{SecPr1}
\section*{Appendix}
\addcontentsline{toc}{section}{Appendix}
In this appendix we prove Lemma \ref{Pr1}. 
\begin{proof}{(\it of Lemma \ref{Pr1}).}
Since $H\in Lip([0,T];\R^d)$ there exists $L>0$ such that 
%\begin{equation}\label{Lip}
$$\vert H(t)-H(s)\vert \le L|t-s|,\;\forall t,s\in[0,T].$$
%\end{equation}
Let us consider, for simplicity, a uniform partition $\{t_j\}_0^m$ of $[0,T].$ Let us set 
$$
\displaystyle \eta_j:=\frac{H(t_j)}{|H(t_j)|},\;j=0\ldots,m-1.
$$
For $t \in [t_j,t_{j+1}]$, $j=0, ..., m-1$, we deduce
 \begin{eqnarray}
H(t) \cdot \eta_j&=&\left(H(t)-H(t_j)\right)\cdot\eta_j+H(t_j)\cdot\eta_j\geq 
-|H(t)-H(t_j)|+|H(t_j)| \nonumber\\
&\ge&-L|t-t_j|+|H(t_j)|\ge -L\frac{T}{m}+|H(t_j)|,
\label{eq1}
\end{eqnarray}
and, since $|H(t)|\le\left|H(t)-H(t_j)\right|+|H(t_j)|,$ 
\begin{equation}\label{eq2}
\left|H(t_j)\right|\geq |H(t)|-|H(t)-H(t_j)|\ge|H(t)|-L|t-t_j|
\ge |H(t)|-L\frac{T}{m}.
\end{equation}
From (\ref{eq1}) and (\ref{eq2}), if we choose %the number of intervals of 
the uniform partition with %$m$ such that
 $m\ge \frac{2%\sqrt{2}%(\sqrt{2}+1)
 LT}{H_0(1-S_*)}$, where we recall that 
$\displaystyle H_0=\min_{t\in[0,T]}|H(t)|$, we obtain the conclusion, 
that is,
$$
H(t)\cdot
\frac{H(t_j)}{|H(t_j)|}\ge |H(t)|-2L\frac{T}{m}\ge %\frac{1}{\sqrt{2}}
S_*
|H(t)|,
\;\;\;\forall t\in[t_{j},t_{j+1}], \;\;\forall j=0,\ldots,m-1.
$$
\smartqed
\qed
\end{proof}

\begin{acknowledgement} This work was partially supported by Grant-in-Aid for 
Scientific Research (S) 15H05740 
and A3 Foresight Program Modeling and Computation of Applied Inverse 
Problemsh by Japan Society for the Promotion of Science.
The first and second author were visitor at The University of 
Tokyo in February 2018, supported by the above grant.\\
The third author was a Visiting Scholar at Rome in April 2018 supported by the University of Rome \lq\lq 
Tor Vergata''.
The third author was also visitor in July 2017 at the University of Naples Federico II, supported by the Department of Mathematics and Applications \lq\lq R. Caccioppoli'' of that University. \\
This work was supported also by the Istituto Nazionale di Alta Matematica 
(INdAM), through the GNAMPA Research Project 2017 \lq\lq 
Comportamento asintotico e controllo di equazioni di evoluzione non lineari'' 
(the coordinator: C. Pignotti). 
Moreover, this research was %has been 
performed within the framework of the GDRE CONEDP (European Research Group 
on \lq\lq Control of Partial Differential Equations'') issued by CNRS, INdAM 
and Universit\'e de Provence. This work was also supported by 
the research project of the University of Naples Federico II: \lq\lq Spectral 
and Geometrical Inequalities''.
\end{acknowledgement}

\end{document}